\newtheorem{Thm}{Theorem}[section]
\newtheorem{proposition}[Thm]{Proposition}
\newtheorem{definition}[Thm]{Definition}
\newtheorem{remark}[Thm]{Remark}
\newtheorem{example}[Thm]{Example}
\newtheorem{theorem}[Thm]{Theorem}
\newtheorem{problem}[Thm]{Problem}
\newcommand{\norm}[1]{\|#1\|}
\DeclareMathOperator{\lspan}{span}
\numberwithin{equation}{section}
\newcommand{\abs}[1]{\lvert#1\rvert}
\begin{document}

\title{Phase retrieval and norm retrieval}

\author[Bahmanpour]{Saeid Bahmanpour}
\address{Department of Mathematics, University
of Missouri, Columbia, MO 65211-4100}
\email{sbgxf@math.missouri.edu}

\author[Cahill]{Jameson Cahill}
\address{Mathematics Department, Duke University, Box 90320,
Durham, NC 27708-0320}
\email{jameson.cahill@gmail.com}

\author[Casazza]{Peter G. Casazza}
\address{Department of Mathematics, University
of Missouri, Columbia, MO 65211-4100}
\email{casazzap@missouri.edu}

\author[Jasper]{John Jasper}
\address{Department of Mathematics, University
of Oregon, Eugene, OR 97403}
\email{jjasper30@gmail.com}

\author[Woodland]{Lindsey M. Woodland}
\address{Department of Mathematics, University
of Missouri, Columbia, MO 65211-4100}
\email{lmwvh4@mail.missouri.edu}

\thanks{Bahmanpour, Cahill, Casazza, Jasper and Woodland were supported by
 NSF 1307685; and  NSF ATD 1042701; AFOSR DGE51: FA9550-11-1-0245}

\subjclass{Primary 32C15 }
\date{today}

\keywords{Phase retrieval, norm retrieval, frames}

\begin{abstract}
Phase retrieval has become a very active area of research.
We will classify when phase retrieval by Parseval
frames passes to the Naimark complement and when phase
retrieval by projections passes to the orthogonal 
complements.  We introduce a new concept we
call norm retrieval and show that this is what is necessary
for passing phase retrieval to complements.  This leads to
a detailed study of norm retrieval and its relationship
to phase retrieval.  One fundamental result:  a frame
$\{\varphi_i\}_{i=1}^M$ yields phase retrieval if and only if
$\{T\varphi_i\}_{i=1}^M$ yields norm retrieval for every
invertible operator $T$. 
\end{abstract}

\maketitle


\section{Introduction}


Phase retrieval is the problem of recovering a signal from the absolute value of linear measurement coefficients called intensity measurements. Since its mathematical introduction in \cite{BCE}, phase retrieval has become a very active area of research. Often times in engineering problems, during processing the phase of a signal is lost and hence a method for recovering this signal with this lack of information is necessary. In particular, this problem occurs in speech recognition \cite{BR,RJ,RBSC}, X-ray crystallography and electron microscopy \cite{BM,F78,F82} and a number of other areas. In some scenarios, such as crystal twinning \cite{D}, the signal is projected onto higher dimensional subspaces and one has to recover this signal from the norms of these projections. This is called {\it phase retrieval by projections} and a deep study of this area was made in \cite{CCPW} (see also \cite{BE}). In \cite{CCPW}, they analyze and compare phase retrieval by vectors to phase retrieval by projections. Although they solve numerous problems in this area, there are still many open fundamental questions about phase retrieval in both cases.

In \cite{CCPW} it was shown that if a family of projections $\{P_i\}_{i=1}^M$ yields phase retrieval, it need not occur that $\{(I-P_i)\}_{i=1}^M$ yields phase retrieval. In the present paper we will classify when this occurs and solve the related problem: If a Parseval frame yields phase retrieval, when does its Naimark complement yield phase retrieval? The fundamental notion which connects phase retrieval to complements is something we call {\it norm retrieval}. After showing that norm retrieval is central to these questions, we make a detailed study of norm retrieval and its relationship to phase retrieval.  In particular, we show that a collection of vectors $\{\varphi_i\}_{i=1}^M$ yields phase retrieval if and only if  $\{T\varphi_i\}_{i=1}^M$ yields norm retrieval for every invertible operator $T$. Another fundamental idea which connects these results is the question of when the identity operator is in the span of $\{P_i\}_{i=1}^M$. Often times when a collection of vectors or projections yield phase retrieval then the identity is in their span. Moreover, in \cite{CCPW} they show that having the identity in the span of a family of projections doing phase retrieval, will generally yield that the orthogonal complements do phase retrieval. We analyze this question regarding the identity operator and its association to norm retrieval and thus phase retrieval.  We will also give a number of examples throughout showing that these results are best possible.  For an up to date review of phase retrieval by vectors and projections please see \cite{CW}.


\section{Preliminaries}

In this section we introduce some necessary definitions and basic theorems from finite frame theory.  Throughout, let $\mathcal{H}_N$ denote an $N$-dimensional Hilbert space.

\begin{definition}
A family of vectors $\{\varphi_i\}_{i=1}^M$ in $\mathcal{H}_N$ is a \textbf{frame} if there are constants $0 < A \leq B < \infty$ so that for all $x \in \mathcal{H}_N$,
\[
A \| x \|^2 \leq \sum_{i=1}^M |\langle x,\varphi_i\rangle|^2 \leq B \|x\|^2,
\]
where $A$ and $B$ are the \textbf{lower and upper frame bounds}, respectively.  If we can choose $A=B$ then the frame is called \textbf{tight}, and if $A=B=1$ it is called a \textbf{Parseval frame}.
\end{definition}
\noindent Note that in the finite dimensional setting, a frame is simply a spanning set of vectors in the Hilbert space.

If $\{\varphi_i\}_{i=1}^M$ is a frame for $\mathcal{H}_N$, then the \textbf{analysis operator} of the frame is the operator $T : \mathcal{H}_N \to \ell_2(M)$ given by
\[
T(x) = \{ \langle x,\varphi_i\rangle \}_{i=1}^M
\]
and the \textbf{synthesis operator} is the adjoint operator, $T^*$, which satisfies
\[
T^{*}\left( \{a_i\}_{i=1}^M \right) = \sum_{i=1}^M a_i \varphi_i.
\]
The \textbf{frame operator} is the positive, self-adjoint, invertible operator $S := T^* T$ on $\mathcal{H}_N$ and satisfies:
\[
S(x) = T^*T(x) = \sum_{i=1}^M \langle x,\varphi_i\rangle \varphi_i.
\]
Moreover, $\{\varphi_i\}_{i=1}^M$ is a frame if there are constants $0<A\leq B<\infty$ such that its frame operator $S$ satisfies $AI \leq S \leq BI$ where $I$ is the identity on $\mathcal{H}_N$.

In particular, the frame operator of a Parseval frame is the identity operator. This fact makes Parseval frames very helpful in applications because they possess the property of perfect reconstruction. That is, if $\{\varphi_i\}_{i=1}^M$ is a Parseval frame for $\mathcal{H}_N$, then for any $x\in \mathcal{H}_N$ we have 
\[x=\sum_{i=1}^M\langle x,\varphi_i\rangle \varphi_i.\]

There is a direct method for constructing Parseval frames.  For $M \geq N$, given an $M \times M$ unitary matrix, if we select any $N$ rows from this matrix, then the column vectors from these rows form a Parseval frame for $\mathcal{H}_N$. Moreover, the leftover set of $M-N$ rows, also have the property that its $M$ columns form a Parseval frame for $\mathcal{H}_{M-N}$. The next theorem, known as Naimark's Theorem, says that this is the only way to obtain Parseval
frames.

\begin{theorem}[Naimark's Theorem]\cite{CK}\label{naimark}
Let $\Phi=\{\varphi_i \}_{i=1}^M$ be a frame for $\mathcal{H}_N$ with analysis operator $T$, let $\{e_i\}_{i=1}^M$ be the standard basis of $\ell_2\left(M\right)$, and let $P:\ell_2\left(M\right) \rightarrow \ell_2\left(M\right)$ be the orthogonal projection onto $\mbox{range}\left(T\right)$. Then the following conditions are equivalent:
\begin{enumerate}
\item $\{\varphi_i\}_{i=1}^M$ is a Parseval frame for $\mathcal{H}_N$.
\item For all $i=1,\dots,M$, we have $Pe_i=T\varphi_i$.
\item There exist $\psi_1,\dots, \psi_M \in \mathcal{H}_{M-N}$ such that $\{ \varphi_i \oplus \psi_i \}_{i=1}^M$ is an orthonormal basis of $\mathcal{H}_M$.
\end{enumerate}

Moreover, $\{\psi_i\}_{i=1}^M$ is a Parseval frame for $\mathcal{H}_{M-N}$. 
\end{theorem}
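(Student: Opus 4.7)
The plan is to use the analysis operator $T$ as the central object and translate each of the three conditions into a statement about $T$ and its adjoint. The starting observation is that from the formula for $T^*$ we have $T^*e_i=\varphi_i$ for each $i$, so $T\varphi_i=TT^*e_i$. This single identity is the pivot for all three equivalences.

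First I would handle $(1)\Leftrightarrow(2)$. A frame is Parseval precisely when $T^*T=I$, i.e.\ when $T$ is an isometry into $\ell_2(M)$. For any isometry, $TT^*$ is self-adjoint and idempotent with range equal to $\mathrm{range}(T)$, so $TT^*=P$. Combined with $T\varphi_i=TT^*e_i$ this gives $Pe_i=T\varphi_i$, proving $(1)\Rightarrow(2)$. Conversely, if $Pe_i=T\varphi_i=TT^*e_i$ for every $i$, then $P=TT^*$ since the $e_i$ form an orthonormal basis. Because $T$ is injective (the $\varphi_i$ span), $T^*T$ is a positive invertible operator on $\mathcal{H}_N$ whose nonzero eigenvalues agree with those of $TT^*=P$; hence every eigenvalue of $T^*T$ equals $1$, so $T^*T=I$ and $(1)$ holds.

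Next I would prove $(1)\Rightarrow(3)$ by the standard Naimark construction. Let $Q=I-P$ be the projection of $\ell_2(M)$ onto $\mathrm{range}(T)^{\perp}$, a space of dimension $M-N$ that we identify with $\mathcal{H}_{M-N}$, and set $\psi_i=Qe_i$. Using $T$ to identify $\varphi_i$ with $T\varphi_i=Pe_i$, the proposed vector in $\mathcal{H}_M\cong\mathcal{H}_N\oplus\mathcal{H}_{M-N}$ is
\[
\varphi_i\oplus\psi_i\;\longleftrightarrow\;Pe_i+Qe_i=e_i,
\]
so orthonormality is automatic. For $(3)\Rightarrow(1)$ I would compute, for any $x\in\mathcal{H}_N$,
\[
\|x\|^2=\|x\oplus 0\|^2=\sum_{i=1}^M|\langle x\oplus 0,\varphi_i\oplus\psi_i\rangle|^2=\sum_{i=1}^M|\langle x,\varphi_i\rangle|^2,
\]
using that $\{\varphi_i\oplus\psi_i\}$ is an orthonormal basis of $\mathcal{H}_M$. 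The same calculation applied to $0\oplus y$ for $y\in\mathcal{H}_{M-N}$ establishes the moreover clause that $\{\psi_i\}_{i=1}^M$ is Parseval in $\mathcal{H}_{M-N}$.

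There is no serious obstacle here; the only point requiring a bit of care is the identification of $\mathrm{range}(T)^{\perp}$ with $\mathcal{H}_{M-N}$ in the construction of the $\psi_i$, which is why conclusion $(3)$ is stated only up to existence of such vectors rather than giving an explicit formula. I would write the proof as a short cycle $(1)\Leftrightarrow(2)$, then $(1)\Leftrightarrow(3)$, followed by the one-line verification of the moreover statement.
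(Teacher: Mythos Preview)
Your proof is correct and is essentially the standard argument. Note, however, that the paper does not actually prove this theorem: it is quoted from \cite{CK} and stated without proof, so there is no in-paper argument to compare against. What you have written is precisely the textbook proof one would find in that reference---pivoting on $T^{*}e_i=\varphi_i$ to get $T\varphi_i=TT^{*}e_i$, identifying $TT^{*}$ with $P$ when $T$ is an isometry, and building the $\psi_i$ as $(I-P)e_i$---so there is no methodological divergence to discuss.
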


Explicitly, we call $\{\psi_i\}_{i=1}^M$ the {\bf Naimark complement} of $\Phi$.  If $\Phi = \{\varphi_i\}_{i=1}^M$
is a Parseval frame, then the analysis operator $T$ of the
frame is an isometry.  So we can associate 
$\varphi_i$ with $T\varphi_i = Pe_i$, and with a slight
 abuse of notation
we have:

\begin{theorem}[Naimark's Theorem]
$\Phi = \{\varphi_i\}_{i=1}^M$ is a Parseval frame for
$\mathcal{H}_N$ if and only if there is an $M$-dimensional Hilbert space $\mathcal{K}_M$ with an orthonormal
basis $\{e_i\}_{i=1}^M$ such that the orthogonal projection
$P:\mathcal{K}_M \rightarrow \mathcal{H}_N$ satisfies
$Pe_i= \varphi_i$ for all $i=1,\dots,M$.  Moreover, the Naimark complement of
$\Phi$ is $\{(I-P)e_i\}_{i=1}^M.$
\end{theorem}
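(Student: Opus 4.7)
The plan is to obtain this statement as essentially a notational reformulation of Theorem 2.2: once the analysis operator is used to identify $\mathcal{H}_N$ with a subspace of $\ell_2(M)$, the condition $Pe_i = T\varphi_i$ from Theorem 2.2 (2) becomes $Pe_i = \varphi_i$, and the direct-sum decomposition in Theorem 2.2 (3) becomes $\mathcal{K}_M = \mathrm{range}(P) \oplus \ker(P)$. So no genuinely new content is required beyond bookkeeping of identifications.

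For the forward direction, I would begin with a Parseval frame $\Phi$ and observe that its analysis operator $T:\mathcal{H}_N \to \ell_2(M)$ is an isometry, since $\|Tx\|^2 = \sum_i |\langle x,\varphi_i\rangle|^2 = \|x\|^2$. Then I would take $\mathcal{K}_M := \ell_2(M)$ with the standard basis $\{e_i\}_{i=1}^M$, identify $\mathcal{H}_N$ with $\mathrm{range}(T) \subseteq \mathcal{K}_M$ via $T$, and let $P$ be the orthogonal projection onto $\mathrm{range}(T)$. Theorem 2.2 (2) gives $Pe_i = T\varphi_i$, which under the identification is $Pe_i = \varphi_i$.

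For the reverse direction, I would assume $\mathcal{H}_N \subseteq \mathcal{K}_M$ with ONB $\{e_i\}$ and projection $P$ onto $\mathcal{H}_N$ satisfying $Pe_i = \varphi_i$, and verify the Parseval property by direct computation: for $x \in \mathcal{H}_N$, using self-adjointness of $P$ and $Px=x$,
\[
\sum_{i=1}^M |\langle x,\varphi_i\rangle|^2 = \sum_{i=1}^M |\langle Px, e_i\rangle|^2 = \|Px\|^2 = \|x\|^2,
\]
where the middle equality is Parseval's identity for the ONB $\{e_i\}$.

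For the moreover statement, I would invoke Theorem 2.2 (3) to obtain $\psi_i \in \mathcal{H}_{M-N}$ such that $\{\varphi_i \oplus \psi_i\}_{i=1}^M$ is an ONB of $\mathcal{H}_M \cong \mathcal{H}_N \oplus \mathcal{H}_{M-N}$, and then identify this space with $\mathcal{K}_M = \mathrm{range}(P) \oplus \ker(P)$ via $\varphi_i \oplus \psi_i \leftrightarrow e_i$. Under this identification, $\psi_i$ corresponds precisely to $(I-P)e_i$, so $\{(I-P)e_i\}_{i=1}^M$ is the Naimark complement. The only point requiring any care is ensuring that the two identifications involved (the one from the analysis operator and the one from the direct sum in Theorem 2.2 (3)) are the same; but both arise canonically from the decomposition $\mathcal{K}_M = \mathrm{range}(P) \oplus \ker(P)$, so the compatibility is automatic. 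I expect no substantive obstacle here.
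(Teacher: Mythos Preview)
Your proposal is correct and matches the paper's approach exactly: the paper presents this second form of Naimark's Theorem as a notational reformulation of Theorem~\ref{naimark}, obtained by using the isometric analysis operator $T$ to identify $\varphi_i$ with $T\varphi_i = Pe_i$, which is precisely the identification you spell out. The paper does not supply details beyond that transition remark, so your write-up in fact fills in the bookkeeping that the paper leaves implicit.
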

 
Note that Naimark complements are only defined for Parseval frames.  Furthermore, Naimark complements  are only defined up to unitary equivalence, that is, if $\{\varphi_i\}_{i=1}^M\subseteq\mathcal{H}_N$ and $\{\psi_i\}_{i=1}^M\subseteq\mathcal{H}_{M-N}$ are Naimark complements, and $U$ and $V$ are unitary operators, then $\{U\varphi_i\}_{i=1}^M$ and $\{V\psi_i\}_{i=1}^M$ are also Naimark complements.

Given a sequence of vectors $\{\varphi_i\}_{i=1}^M$ and an orthogonal projection $P$, throughout this paper we will frequently refer to $\{P\varphi_i\}_{i=1}^M$ as possessing certain properties, such as: full spark, complement property, phase retrieval, and so on. By this we mean that it has these properties in the range of $P$.

This concludes a brief introduction to finite frames and other terms which are necessary throughout the present paper. For a more in depth review of finite frame theory, the interested reader is referred to \cite{CK}.


\section{Phase retrieval by vectors and Naimark complements}

Signal reconstruction is an important topic of research with applications to numerous fields. Recovering the phase of a signal through the use of a redundant system of vectors (i.e. a frame) or a spanning collection of subspaces is currently a well studied topic (see \cite{CW} for a survey of this 
field).

\begin{definition} 
A set of vectors $\{\varphi_i\}_{i=1}^M$ in $\mathbb{R}^N$ (or $\mathbb{C}^N$) yields \textbf{phase retrieval} if for all $x, y \in \mathbb{R}^N$ (or $\mathbb{C}^N$) satisfying  $|\langle x,\varphi_i\rangle|=|\langle y, \varphi_i\rangle |$ for all $i=1,\dots, M$, then $x=cy$ where $c = \pm 1$ in $\mathbb{R}^N$ (and for $\mathbb{C}^N$, $c \in \mathbb{T}^1$ where $\mathbb{T}^1$ is the complex unit circle). 
\end{definition}

A fundamental result from \cite{BCE} classifies
 phase retrieval in the real case by way of the {\bf complement
property}.

\begin{definition}
A frame $\{\varphi_i\}_{i=1}^M$ in $\mathcal{H}_N$ satisfies the \textbf{complement property} if for all subsets $\mathcal{I} \subset \{1,\dots, M\}$, either $\lspan\{\varphi_i\}_{i\in \mathcal{I}}=\mathcal{H}_N$ or $\lspan\{\varphi_i\}_{i\in \mathcal{I}^c}=\mathcal{H}_N$.
\end{definition}

Similar to the complement property, the notion of {\bf full spark} for a set of vectors is very
useful for guaranteeing that large subsets of the vectors
actually span the space.

\begin{definition}
Given a family of vectors $\Phi=\{\varphi_i\}_{i=1}^M$ in $\mathcal{H}_N$, the \textbf{spark} of $\Phi$ is defined as the cardinality of the smallest linearly dependent subset of $\Phi$.  When $\mbox{spark}(\Phi)=N+1$, every subset of size $N$ is linearly independent, and $\Phi$ is said to be {\bf full spark}.
\end{definition}

\begin{theorem}\label{thmbce}\cite{BCE}
A frame $\{\varphi_i\}_{i=1}^M$ in $\mathbb{R}^N$ yields
phase retrieval if and only if it has the complement
property.  In particular, a full spark frame with
$2N-1$ vectors yields phase retrieval.  Moreover,
if $\{\varphi_i\}_{i=1}^M$
yields phase retrieval in $\mathbb{R}^N$, then $M\ge 2N-1$,
and no set of $2N-2$ vectors yields phase retrieval.
\end{theorem}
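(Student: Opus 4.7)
The plan is to use the standard trick that in the real case, $|\langle x,\varphi_i\rangle|=|\langle y,\varphi_i\rangle|$ is equivalent to the product identity
\[
\langle x+y,\varphi_i\rangle\cdot\langle x-y,\varphi_i\rangle = 0.
\]
So the natural substitution is $u=x+y$, $v=x-y$. Phase retrieval then fails precisely when we can find nonzero $u,v\in\mathbb{R}^N$ (the nonzeroness corresponding to $x\neq\pm y$) such that for every $i$, at least one of $\langle u,\varphi_i\rangle$, $\langle v,\varphi_i\rangle$ vanishes. Partition $\{1,\dots,M\}$ by setting $\mathcal{I}=\{i:\langle u,\varphi_i\rangle=0\}$, so that automatically $\langle v,\varphi_i\rangle=0$ for every $i\in\mathcal{I}^c$. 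Then $u\neq 0$ means $\{\varphi_i\}_{i\in\mathcal{I}}$ does not span $\mathbb{R}^N$, and similarly $v\neq 0$ means $\{\varphi_i\}_{i\in\mathcal{I}^c}$ does not span. Conversely, any partition for which neither side spans yields nonzero $u,v$ in the respective orthogonal complements, and setting $x=(u+v)/2$, $y=(u-v)/2$ produces a failure of phase retrieval. This equivalence is exactly the complement property, proving the main iff.

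For the second assertion, suppose $\{\varphi_i\}_{i=1}^{2N-1}$ is full spark. For any partition $\mathcal{I}\cup\mathcal{I}^c=\{1,\dots,2N-1\}$, at least one side has size $\geq N$, and full spark means every $N$-element subset is a basis, so that side spans $\mathbb{R}^N$. Hence the complement property holds and phase retrieval follows from the first part.

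For the lower bound, suppose $M\leq 2N-2$. Split $\{1,\dots,M\}$ into two disjoint subsets $\mathcal{I}$ and $\mathcal{I}^c$ each of size at most $N-1$. Since a subset of $\mathbb{R}^N$ with fewer than $N$ vectors cannot span, neither $\{\varphi_i\}_{i\in\mathcal{I}}$ nor $\{\varphi_i\}_{i\in\mathcal{I}^c}$ spans, so the complement property fails and thus phase retrieval fails. In particular $M\geq 2N-1$ is necessary, and no set of $2N-2$ vectors suffices.

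The only subtle point — and what I would treat as the main obstacle — is verifying carefully that the failure of phase retrieval (i.e.\ existence of $x,y$ with $x\neq\pm y$ satisfying $|\langle x,\varphi_i\rangle|=|\langle y,\varphi_i\rangle|$ for all $i$) corresponds precisely to existence of simultaneously nonzero $u,v$ satisfying the orthogonality partition condition. The map $(x,y)\mapsto(x+y,x-y)$ is a bijection on $\mathbb{R}^N\times\mathbb{R}^N$ with $x=\pm y$ corresponding to $v=0$ or $u=0$ respectively, so this equivalence is clean. Everything else is either the algebraic identity from the first paragraph or a counting argument on partition sizes.
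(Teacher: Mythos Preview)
Your argument is correct and is precisely the standard proof from \cite{BCE}: the polarization identity $|\langle x,\varphi_i\rangle|^2-|\langle y,\varphi_i\rangle|^2=\langle x+y,\varphi_i\rangle\langle x-y,\varphi_i\rangle$ in the real case, followed by the substitution $u=x+y$, $v=x-y$ and the induced partition, gives exactly the equivalence with the complement property; the full spark and counting arguments for the remaining assertions are likewise the expected ones. Note that the present paper does not supply its own proof of this theorem---it is quoted from \cite{BCE} as background---so there is nothing further to compare against.
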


In general, it is not necessary for a frame to be full spark in order to yield phase retrieval.  For example, as long as our frame
contains a full spark subset of $2N-1$ vectors, it will
do phase retrieval.  However, if the frame contains exactly 
$2N-1$ vectors, then clearly it does phase retrieval if
and only if it is full spark.

From Theorem \ref{thmbce}, it is clear that full spark and the complement property are important properties for a frame when it comes to classification results regarding phase retrieval. We will now present results regarding frames with these such properties. These results stand alone but they also help in analyzing further phase retrieval and norm retrieval results.

Next we will compare phase retrieval for Parseval frames
and their Naimark complements.   
For this we need a result from \cite{BCPS}:

\begin{theorem}\cite{BCPS}\label{thmbcps}
Let $P$ be a projection on $\mathcal{H}_M$ with orthonormal basis
 $\{e_i\}_{i=1}^M$ and let $\mathcal{I}\subset \{1,2,\ldots,M\}$.
The following are equivalent:
 
 (1)  $\{Pe_i\}_{i\in \mathcal{I}}$ is linearly independent.
 
 (2)  $\{(I-P)e_i\}_{i\in \mathcal{I}^c}$ spans $(I-P)\mathcal{H}_M$.
 \end{theorem}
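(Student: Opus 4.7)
The plan is to prove the equivalence by negating both sides and showing that each negation is equivalent to the same statement: there exists a nonzero vector $y \in (I-P)\mathcal{H}_M$ lying in $\lspan\{e_i : i \in \mathcal{I}\}$.

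First I would rewrite the failure of (1). The set $\{Pe_i\}_{i\in\mathcal{I}}$ is linearly dependent iff there exist scalars $\{a_i\}_{i\in\mathcal{I}}$, not all zero, with $\sum_{i\in\mathcal{I}} a_i Pe_i = 0$. Setting $y := \sum_{i\in\mathcal{I}} a_i e_i$, orthonormality of $\{e_i\}_{i=1}^M$ ensures $y \neq 0$, and the equation becomes $Py = 0$, i.e.\ $y \in \ker P = (I-P)\mathcal{H}_M$. Conversely, any nonzero $y \in (I-P)\mathcal{H}_M$ with $y \in \lspan\{e_i : i\in\mathcal{I}\}$ produces such a dependence via $a_i = \langle y,e_i\rangle$. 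So the failure of (1) is exactly the existence of a nonzero $y$ in $(I-P)\mathcal{H}_M \cap \lspan\{e_i : i\in\mathcal{I}\}$.

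Next I would rewrite the failure of (2). The family $\{(I-P)e_i\}_{i\in\mathcal{I}^c}$ fails to span $(I-P)\mathcal{H}_M$ iff there exists a nonzero $y \in (I-P)\mathcal{H}_M$ with $\langle y,(I-P)e_i\rangle = 0$ for every $i \in \mathcal{I}^c$. Since $I-P$ is self-adjoint and $y = (I-P)y$, we have
\[
\langle y,(I-P)e_i\rangle = \langle (I-P)y, e_i\rangle = \langle y,e_i\rangle.
\]
Hence the failure of (2) is precisely the existence of a nonzero $y \in (I-P)\mathcal{H}_M$ with $\langle y,e_i\rangle = 0$ for all $i \in \mathcal{I}^c$, which by expanding $y$ in the orthonormal basis $\{e_i\}_{i=1}^M$ says exactly that $y \in \lspan\{e_i : i\in\mathcal{I}\}$.

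The two reformulations coincide verbatim, so $\neg(1)\Leftrightarrow\neg(2)$, proving the theorem. I do not expect any real obstacle here; the entire argument is a duality between the kernel of $P$ restricted to coordinate subspaces and the image of $I-P$ on the complementary index set, and the only point to handle carefully is the self-adjoint/idempotent swap $\langle y,(I-P)e_i\rangle = \langle y,e_i\rangle$ when $y \in (I-P)\mathcal{H}_M$.
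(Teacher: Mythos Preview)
Your argument is correct: you reduce both $\neg(1)$ and $\neg(2)$ to the existence of a nonzero vector in $(I-P)\mathcal{H}_M \cap \lspan\{e_i : i\in\mathcal{I}\}$, and the key identity $\langle y,(I-P)e_i\rangle = \langle y,e_i\rangle$ for $y\in(I-P)\mathcal{H}_M$ is handled cleanly. Note, however, that the paper does not supply its own proof of this statement; it is quoted from \cite{BCPS} and used as a tool, so there is no in-paper argument to compare against.
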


 First, we need to see that the full spark property passes
 from a frame to its Naimark complement.

\begin{proposition}\label{cor1314}
 A Parseval frame is full spark if and only if its Naimark
 complement is full spark.
 \end{proposition}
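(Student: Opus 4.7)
The plan is to reduce everything to Theorem \ref{thmbcps}, which exactly trades independence on the range of $P$ for spanning on the range of $I-P$. The key observation is a dimension count: full spark for a Parseval frame $\{\varphi_i\}_{i=1}^M$ in $\mathcal{H}_N$ means every subset of size $N$ is linearly independent, while full spark for its Naimark complement $\{\psi_i\}_{i=1}^M$ in $\mathcal{H}_{M-N}$ means every subset of size $M-N$ is linearly independent. Since $M-N$ vectors in an $(M-N)$-dimensional space are linearly independent if and only if they span, I can use Theorem \ref{thmbcps} as the bridge.

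First, I would invoke Naimark's Theorem to realize the setup inside $\mathcal{H}_M$: fix an orthonormal basis $\{e_i\}_{i=1}^M$ and an orthogonal projection $P$ so that $\varphi_i = Pe_i$ and $\psi_i = (I-P)e_i$. Assume $\{\varphi_i\}_{i=1}^M$ is full spark and pick any index set $\mathcal{J} \subset \{1,\dots,M\}$ with $|\mathcal{J}| = M-N$. Its complement $\mathcal{J}^c$ has size $N$, so $\{Pe_i\}_{i\in \mathcal{J}^c} = \{\varphi_i\}_{i \in \mathcal{J}^c}$ is linearly independent by hypothesis. Theorem \ref{thmbcps} then gives that $\{(I-P)e_i\}_{i \in \mathcal{J}} = \{\psi_i\}_{i \in \mathcal{J}}$ spans $(I-P)\mathcal{H}_M = \mathcal{H}_{M-N}$; since $|\mathcal{J}|=M-N$, this spanning set is in fact a basis, hence linearly independent. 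This proves one direction.

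The converse is symmetric: the roles of $P$ and $I-P$ are interchangeable, as are the roles of $N$ and $M-N$. Given a subset of $\{\varphi_i\}_{i=1}^M$ of size $N$, its complement has size $M-N$, and I apply Theorem \ref{thmbcps} in the reverse direction to deduce linear independence of the original subset from the spanning/independence of the Naimark complement.

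I do not expect any serious obstacle here; the content of the proof is just the dimension bookkeeping and a clean application of Theorem \ref{thmbcps}. The only minor subtlety is making sure to invoke the fact that in the lower-dimensional ambient space, a set of the ``right'' cardinality is independent if and only if it spans, so that the spanning conclusion of Theorem \ref{thmbcps} can be upgraded to linear independence on the Naimark side.
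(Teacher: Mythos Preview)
Your proposal is correct and follows essentially the same approach as the paper: both realize the Parseval frame and its Naimark complement as $\{Pe_i\}$ and $\{(I-P)e_i\}$ via Naimark's Theorem and then invoke Theorem~\ref{thmbcps} to transfer full spark across the complement. The only cosmetic difference is that you use a single application of Theorem~\ref{thmbcps} together with the dimension count ``$M-N$ spanning vectors in an $(M-N)$-dimensional space are independent,'' whereas the paper phrases it as using both the independence and spanning of $\{Pe_i\}_{i\in\mathcal{I}}$ to obtain both properties on the complement side; these are the same argument.
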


\begin{proof}
By Theorem \ref{naimark} any Parseval frame can be written as $\{Pe_i\}_{i=1}^M$ where $\{e_i\}_{i=1}^M$ is an orthonormal basis for $\ell_2(M)\simeq\mathcal{H}_M$ and $P$ is an orthogonal projection.  Furthermore, the Naimark complement of this Parseval frame is $\{(I-P)e_i\}_{i=1}^M$.  We also have that $\{Pe_i\}_{i=1}^M$ is full spark if and only if for any subset $\mathcal{I}\subseteq\{1,...,M\}$ such that $|\mathcal{I}|=N$ we have that $\{Pe_i\}_{i\in\mathcal{I}}$ is linearly independent and spanning (in the image of $P$).  So under this assumption Theorem \ref{thmbcps} implies that $\{(I-P)e_i\}_{i\in\mathcal{I}}$ is also linearly independent and spanning (in the image of $I-P$), so $\{(I-P)e_i\}_{i=1}^M$ is also full spark.  The other direction follows from the same argument by reversing the roles of $P$ and $I-P$.
\end{proof}

In general, if a Parseval frame yields phase retrieval, its
Naimark complement may not yield phase retrieval.  This follows
from the fact that there may not be enough vectors in the
Naimark complement to satisfy the complement property
as we see in the next proposition.

\begin{proposition}
Assume a Parseval frame $\{\varphi_i\}_{i=1}^M$ yields 
phase retrieval for $\mathbb{R}^N$ and its Naimark
complement $\{\psi_i\}_{i=1}^M$
yields phase retrieval for $\mathbb{R}^{M-N}$.
Then $2N-1 \le M \le 2N+1$.
\end{proposition}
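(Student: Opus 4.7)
The plan is to simply apply the lower bound from Theorem \ref{thmbce} to each of the two Parseval frames in turn. Theorem \ref{thmbce} says that any frame yielding phase retrieval in $\mathbb{R}^K$ must have at least $2K-1$ vectors.

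First, since $\{\varphi_i\}_{i=1}^M$ yields phase retrieval in $\mathbb{R}^N$, Theorem \ref{thmbce} immediately gives $M \ge 2N-1$, which is the lower bound. Second, the Naimark complement $\{\psi_i\}_{i=1}^M$ lives in $\mathbb{R}^{M-N}$ and also consists of $M$ vectors; applying Theorem \ref{thmbce} in this ambient dimension forces
\[
M \ge 2(M-N) - 1,
\]
which rearranges to $M \le 2N+1$. Combining the two inequalities yields $2N-1 \le M \le 2N+1$.

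There is essentially no obstacle here; the only subtlety worth noting is that the Naimark complement is a frame (in particular, a Parseval frame) for $\mathcal{H}_{M-N}$ as guaranteed by Theorem \ref{naimark}, so that Theorem \ref{thmbce} may legitimately be invoked with ambient dimension $M-N$. The two cardinality bounds on frames doing phase retrieval are in opposite directions on $M$ relative to $N$ and $M-N$, and it is precisely this tension that squeezes $M$ into the window $\{2N-1, 2N, 2N+1\}$.
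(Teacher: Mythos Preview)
Your proof is correct and follows essentially the same argument as the paper: apply the lower bound of Theorem \ref{thmbce} first in dimension $N$ to get $M\ge 2N-1$, then in dimension $M-N$ to get $M\ge 2(M-N)-1$, and rearrange. The only addition you make is the remark that the Naimark complement is indeed a Parseval frame for $\mathcal{H}_{M-N}$ by Theorem \ref{naimark}, which is a helpful clarification but not a departure from the paper's approach.
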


\begin{proof}
Since $\{\varphi_i\}_{i=1}^M$ yields phase retrieval in
$\mathbb{R}^N$, we have by Theorem \ref{thmbce} that
$2N-1 \le M$.  If the Naimark complement
$\{\psi_i\}_{i=1}^M$ yields phase retrieval in 
$\mathbb{R}^{M-N}$ then again by Theorem \ref{thmbce}
we have $2(M-N)-1 \le M$. That is, $M \le 2N+1$.
\end{proof}

Unfortunately, even if we restrict the number of vectors in a Parseval frame to $2N-1 \le M \le 2N+1$, its Naimark
complement still might not yield phase retrieval.

\begin{example}
Let $\{\varphi_i\}_{i=2}^{2N}$ be a set of full spark vectors in $\mathbb{R}^N$, with $N \geq 3$. Define $\varphi_1=\varphi_2$ and let $S$ be the frame operator for $\{\varphi_i\}_{i=1}^{2N}$. Note that $\{S^{-\frac{1}{2}}\varphi_i\}_{i=2}^{2N}$ is still a full spark set of vectors. Therefore $\{S^{-\frac{1}{2}}\varphi_i\}_{i=1}^{2N}$ yields phase retrieval.  That is, for any partition $\mathcal{I}, \mathcal{I}^c \subset \{1,\dots, 2N\}$, either $\mathcal{I}$ or $\mathcal{I}^c$ has at least $N$ elements from the full spark family
$\{S^{-\frac{1}{2}}\varphi_i\}_{i=2}^{2N}$ and hence spans $\mathbb{R}^N$.

Now we will see that the Naimark complement of  $\{S^{-\frac{1}{2}}\varphi_i\}_{i=1}^{2N}$ fails phase retrieval. Partition  $\{S^{-\frac{1}{2}}\varphi_i\}_{i=1}^{2N}$ into $\{S^{-\frac{1}{2}}\varphi_i\}_{i=1}^{2}$ and $\{S^{-\frac{1}{2}}\varphi_i\}_{i=3}^{2N}$. Observe that neither set is linearly independent since $\varphi_1=\varphi_2$ and $N \geq 3$. By Theorem \ref{thmbcps}, the Naimark complement of each set does not span $\mathbb{R}^{2N-N}=\mathbb{R}^N$. Hence this is a partition of the Naimark complement of  $\{S^{-\frac{1}{2}}\varphi_i\}_{i=1}^{2N}$ which fails complement property and therefore fails phase retrieval. 
\end{example}

With the aid of full spark, we are able to pass
phase retrieval to Naimark complements as long as we
satisfy the restriction on the number of vectors.

\begin{proposition}
If $\Phi=\{\varphi_i\}_{i=1}^M$ is a full spark Parseval frame in $\mathbb{R}^N$ and $2N-1 \leq M \leq 2N+1$ then $\Phi$ yields phase retrieval in $\mathbb{R}^N$ and the Naimark complement of $\Phi$ yields phase retrieval in $\mathbb{R}^{M-N}$. 
\end{proposition}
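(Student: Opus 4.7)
The plan is to verify the complement property in both the original frame and its Naimark complement using the full spark hypothesis together with a pigeonhole on partition sizes, and then invoke Theorem \ref{thmbce} twice.

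First I would handle $\Phi$ itself. Given any partition $\mathcal{I}\sqcup \mathcal{I}^c = \{1,\dots,M\}$, since $|\mathcal{I}|+|\mathcal{I}^c|=M\ge 2N-1$, at least one of the two sets has cardinality $\ge N$. Because $\Phi$ is full spark, any $N$ vectors from $\Phi$ are linearly independent and hence span $\mathbb{R}^N$. Thus $\Phi$ satisfies the complement property, and by Theorem \ref{thmbce} it yields phase retrieval in $\mathbb{R}^N$.

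For the Naimark complement $\Psi=\{\psi_i\}_{i=1}^M\subset \mathbb{R}^{M-N}$, I would first apply Proposition \ref{cor1314} to conclude that $\Psi$ is also full spark (in $\mathbb{R}^{M-N}$). Then for any partition $\mathcal{I}\sqcup \mathcal{I}^c$ I claim one of $|\mathcal{I}|,|\mathcal{I}^c|$ is at least $M-N$: otherwise both would satisfy $|\mathcal{I}|,|\mathcal{I}^c|\le M-N-1$, giving $M=|\mathcal{I}|+|\mathcal{I}^c|\le 2(M-N-1)$, i.e.\ $M\ge 2N+2$, contradicting the hypothesis $M\le 2N+1$. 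Since $\Psi$ is full spark in $\mathbb{R}^{M-N}$, any $M-N$ of its vectors span, so $\Psi$ has the complement property and Theorem \ref{thmbce} again gives phase retrieval in $\mathbb{R}^{M-N}$.

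The argument has no real obstacle; it is a direct assembly of Theorem \ref{thmbce} and Proposition \ref{cor1314}, with the only subtlety being the symmetric pigeonhole estimate that matches the upper bound $M\le 2N+1$ with the required threshold $M-N$ on the complement side (just as the lower bound $M\ge 2N-1$ matches the threshold $N$ on the original side). The two numerical bounds on $M$ in the hypothesis are precisely what makes the complement property pigeonhole work simultaneously for $\Phi$ and $\Psi$.
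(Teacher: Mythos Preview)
Your proof is correct and follows essentially the same approach as the paper: invoke Proposition~\ref{cor1314} to transfer full spark to the Naimark complement, then use the numerical bounds $2N-1\le M$ and $2(M-N)-1\le M$ together with Theorem~\ref{thmbce} to conclude phase retrieval on both sides. The only difference is that you spell out the pigeonhole verification of the complement property explicitly, whereas the paper absorbs it into the citation of Theorem~\ref{thmbce}.
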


\begin{proof}
By Proposition \ref{cor1314}, the Naimark complement of $\Phi$ is full spark in $\mathbb{R}^{M-N}$. Since $2N-1 \le M$
and $2(M-N)-1 \le M$, by Theorem  \ref{thmbce} both $\Phi$ and its Naimark complement have the complement property in their respective spaces.  \end{proof}


\section{Phase retrieval by projections and norm retrieval}

Phase retrieval for the higher dimensional case is similar to the one dimensional case. 

\begin{definition}
Let $\{W_{i}\}_{i=1}^{M}$ be a collection of subspaces of $\mathcal{H}_{N}$ and let $\{P_{i}\}_{i=1}^{M}$ be the
orthogonal  projections onto these subspaces. We say that $\{W_{i}\}_{i=1}^{M}$ (or $\{P_{i}\}_{i=1}^{M}$) yields \textbf{phase retrieval} if for all $x,y\in\mathcal{H}_{N}$ satisfying $\norm{P_{i}x}=\norm{P_{i}y}$ for all $i=1,\ldots,M$, then $x=cy$ for some scalar $c$ with $\abs{c}=1$.
\end{definition}

Recently, a detailed study of phase retrieval by projections appeared
in \cite{CCPW}. Originally, it was believed that higher ranked projections gave
much less information than vectors and hence phase retrieval
by projections would require many more projections than
phase retrieval by vectors requires.  In \cite{CCPW}, the surprising result appears that we do not need more projections to yield phase retrieval.

\begin{theorem}\cite{CCPW}
Phase retrieval can be done in $\mathbb{R}^N$ with $2N-1$ arbitrary rank projections $(0 <$ rank $P_i<N$).
\end{theorem}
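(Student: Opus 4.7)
The plan is to exhibit, for any prescribed ranks $r_1,\dots,r_{2N-1}$ with $0 < r_i < N$, an explicit family $\{P_i\}_{i=1}^{2N-1}$ of orthogonal projections (with $\mathrm{rank}(P_i)=r_i$) that yields phase retrieval, by bootstrapping from the rank-one case in Theorem~\ref{thmbce}. The starting observation is that $\|P_i x\|^2 = \mathrm{Tr}(P_i\, xx^T)$, so phase retrieval by $\{P_i\}$ is equivalent to the linear functionals $A\mapsto \mathrm{Tr}(P_iA)$ separating the rank-one matrices $xx^T$ up to sign; equivalently, for every $x,y$ with $x\neq\pm y$ there should exist $i$ with $\mathrm{Tr}\bigl(P_i(xx^T-yy^T)\bigr)\neq 0$.

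First, I would invoke Theorem~\ref{thmbce} to fix a full spark family $\{v_i\}_{i=1}^{2N-1}$ that already yields phase retrieval in the rank-one sense. For each $i$, I would choose an $(r_i-1)$-dimensional subspace $U_i\subseteq v_i^\perp$ and set $W_i=\mathrm{span}\{v_i\}\oplus U_i$, with $P_i$ the projection onto $W_i$. The decomposition $\|P_ix\|^2 = |\langle x,v_i\rangle|^2/\|v_i\|^2 + \|P_{U_i}x\|^2$ shows that $P_i$ always sees at least as much information as the rank-one projection along $v_i$, though not more than the sum. The target is to choose the auxiliary subspaces $U_i$ generically so that the additive perturbations from the $P_{U_i}$ do not combine to cancel the separating power of the $v_i$ terms.

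Second, I would frame the argument in a genericity / Zariski setting: the "bad" tuples $(P_1,\dots,P_{2N-1})\in\prod_{i=1}^{2N-1}G(r_i,N)$ that fail phase retrieval form a closed semi-algebraic subset, and to show it is proper it suffices to exhibit one good tuple. Starting from the rank-one case and enlarging each $P_i$ in a generic direction inside $v_i^\perp$, I would argue by continuity that phase retrieval persists in a neighborhood of the starting configuration, and by a dimension/intersection analysis of the variety $\{xx^T-yy^T:x\neq\pm y\}$ (which has real dimension $2N-1$ modulo trivial rescalings) that the chosen projections generically avoid having any nonzero bad pair in the common kernel.

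The main obstacle is precisely that the number of available measurements $2N-1$ equals the dimension of the bad-pair variety, so a naive parameter count gives no slack and a direct genericity argument does not automatically produce positive codimension for the failure locus. Overcoming this requires using the specific algebraic structure of rank $\leq 2$ indefinite symmetric matrices together with the full spark property of the base system $\{v_i\}$: the latter ensures that the rank-one contributions $|\langle x,v_i\rangle|^2/\|v_i\|^2$ already distinguish $xx^T$ from $yy^T$, so generic enlargements $U_i$ can only perturb, never cancel, this separation. The delicate point to verify is that for a generic choice of the $U_i$ no cancellation between the rank-one contribution and the $P_{U_i}$ contribution can produce a simultaneous zero of all $2N-1$ trace functionals on the bad-pair variety.
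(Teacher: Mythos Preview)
The paper itself does not prove this statement; it is quoted from \cite{CCPW} without proof, so there is no in-paper argument to compare against and I can only assess your proposal on its own terms.

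Your proposal is a plan, not a proof, and the gap sits exactly where you yourself place it. Two specific problems. First, the ``continuity'' step in your second paragraph is not well-posed: passing from the rank-one projection $v_iv_i^T/\|v_i\|^2$ to a rank-$r_i$ projection is not a small perturbation within a single manifold, so the phrase ``phase retrieval persists in a neighborhood of the starting configuration'' has no immediate meaning --- the starting configuration does not even lie in $\prod_i G(r_i,N)$. Second, and more seriously, your own dimension count shows that naive genericity gives codimension zero for the failure locus, so something beyond parameter counting is required. Your final paragraph asserts that generic choices of $U_i$ prevent simultaneous cancellation of the rank-one contributions, but this is precisely the content of the theorem and you have offered no mechanism for establishing it. The heuristic that enlargements ``can only perturb, never cancel'' is not a proof: adding the term $\|P_{U_i}x\|^2-\|P_{U_i}y\|^2$ to the rank-one discrepancy $(|\langle x,v_i\rangle|^2-|\langle y,v_i\rangle|^2)/\|v_i\|^2$ can certainly produce zero for suitable $U_i$, so cancellation in individual measurements is entirely possible. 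The real question is whether such cancellations can align across all $2N-1$ indices for a single bad pair when the $U_i$ are chosen generically, and nothing in your outline addresses that.

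To close the gap you would need a genuine structural input --- for instance, an incidence-variety argument showing that over a generic point of $\prod_i G(r_i-1,v_i^\perp)$ the fiber of bad pairs $Q=xx^T-yy^T$ is empty, which requires controlling fiber dimensions over the rank-$\le 2$ indefinite locus and using the full spark hypothesis on $\{v_i\}$ in a concrete way rather than just invoking it. As written, the proposal correctly locates the difficulty but does not resolve it.
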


This result yields the equally surprising problem,

\begin{problem}
Can phase retrieval be done in $\mathbb{R}^N$ with fewer
than $2N-1$ projections?
\end{problem}

Another question which naturally arises in this context is given subspaces $\{W_i\}_{i=1}^M$ of $\mathcal{H}_N$
which yield phase retrieval, do $\{W_i^{\perp}
\}_{i=1}^M$ yield phase retrieval?  It is shown in \cite{CCPW}
that this is not true in general.  We introduce a
new fundamental property, norm retrieval, which is
precisely what is needed to pass phase retrieval to
orthogonal complements.  Next we make precise the definition of norm retrieval and then show its importance to phase retrieval. After this we will develop the basic properties of norm retrieval.

\begin{definition}
Let $\{W_i\}_{i=1}^M$ be a collection of subspaces in $\mathcal{H}_N$ and define $\{P_i\}_{i=1}^M$ to be the orthogonal projections onto each of these subspaces. We say that $\{W_{i}\}_{i=1}^{M}$ (or $\{P_i\}_{i=1}^M$) yields \textbf{norm retrieval} if for all $x,y \in \mathcal{H}_N$ satisfying $\norm{P_{i}x}=\norm{P_{i}y}$ for all $i=1,\ldots,M$, then $\norm{x}=\norm{y}$.
\end{definition}

\begin{remark}
Although trivial, it is important to point out that any collection of subspaces which yields phase retrieval necessarily yields norm retrieval. However, the converse need not hold since any orthonormal set of vectors does norm
retrieval but has too few vectors to do phase retrieval.
\end{remark}

We will start by showing that norm retrieval is precisely
the condition needed to pass phase retrieval to orthogonal
complements.

\begin{proposition}\label{oprenr}
Let $\{W_i\}_{i=1}^M$ be a collection of subspaces in $\mathcal{H}_N$ yielding phase retrieval and let $\{P_i\}_{i=1}^M$ be the projections onto these subspaces. The following are equivalent:
\begin{enumerate}
\item $\{I-P_i\}_{i=1}^M$ yields phase retrieval.
\item $\{I-P_i\}_{i=1}^M$ yields norm retrieval.
\end{enumerate}
\end{proposition}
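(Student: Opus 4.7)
The plan is to prove $(1)\Rightarrow(2)$ as a triviality and $(2)\Rightarrow(1)$ via the Pythagorean identity together with the standing hypothesis that $\{P_i\}_{i=1}^M$ itself yields phase retrieval.

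For $(1)\Rightarrow(2)$: I would simply invoke the remark immediately preceding the proposition, which notes that any collection of subspaces yielding phase retrieval automatically yields norm retrieval. No further work is needed.

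For $(2)\Rightarrow(1)$: Start with $x,y\in\mathcal{H}_N$ satisfying $\|(I-P_i)x\|=\|(I-P_i)y\|$ for all $i=1,\ldots,M$. The plan is to reduce this hypothesis to the corresponding statement for the original projections $P_i$, so that the assumed phase retrieval of $\{P_i\}_{i=1}^M$ can be applied. First, because $\{I-P_i\}_{i=1}^M$ yields norm retrieval by hypothesis (2), the equalities $\|(I-P_i)x\|=\|(I-P_i)y\|$ immediately give $\|x\|=\|y\|$. Next, apply the Pythagorean identity
\[
\|x\|^2=\|P_ix\|^2+\|(I-P_i)x\|^2,\qquad \|y\|^2=\|P_iy\|^2+\|(I-P_i)y\|^2,
\]
and subtract: using $\|x\|=\|y\|$ and $\|(I-P_i)x\|=\|(I-P_i)y\|$, I conclude $\|P_ix\|=\|P_iy\|$ for every $i$. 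Since $\{P_i\}_{i=1}^M$ yields phase retrieval, there exists a scalar $c$ with $|c|=1$ such that $x=cy$. This is exactly the definition of phase retrieval for $\{I-P_i\}_{i=1}^M$.

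There is essentially no obstacle here; the content of the proposition lies in having identified the correct notion (norm retrieval) that precisely compensates for the loss of the overall norm when one passes from $P_i$ to $I-P_i$. The Pythagorean identity is what forces norm retrieval to be both necessary and sufficient: it is exactly the missing piece of data needed to convert information about $\|(I-P_i)x\|$ into information about $\|P_ix\|$, and vice versa.
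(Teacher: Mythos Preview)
Your proposal is correct and essentially identical to the paper's proof: both directions use the same reasoning, with $(1)\Rightarrow(2)$ being the trivial implication and $(2)\Rightarrow(1)$ following from the Pythagorean identity $\|x\|^2=\|P_ix\|^2+\|(I-P_i)x\|^2$ together with the standing hypothesis that $\{P_i\}_{i=1}^M$ yields phase retrieval.
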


\begin{proof}
 $(1)$ $\Rightarrow$ $(2)$ Since phase retrieval always implies norm retrieval then this is clear.\\
 $(2)$ $\Rightarrow$ $(1)$ Let $x,y \in \mathcal{H}_N$ be such that $\|(I-P_i)x\|^2=\|(I-P_i)y\|^2$ for all $i=1,\dots,M$. Since $\{I-P_i\}_{i=1}^M$ yields norm retrieval then this implies that $\|x\|^2=\|y\|^2$. Since $P$ and $(I-P)$ correspond to orthogonal subspaces then $\|x\|^2=\|P_ix\|^2+\|(I-P_i)x\|^2$ for all $i=1,\dots,M$. Thus, for all $i=1,\dots, M$ we have
\[\|P_ix\|^2=\|x\|^2 - \|(I-P_i)x\|^2=\|y\|^2 -\|(I-P_i)y\|^2=\|P_iy\|^2.\]
Since $\{W_i\}_{i=1}^M$ yields phase retrieval then this implies that $x=cy$ for some scalar $|c|=1$. Therefore $\{I-P_i\}_{i=1}^M$ yields phase retrieval.\end{proof}

We can think of norm retrieval as giving us one free
measurement when trying to do phase retrieval.  For
example, let $\{e_i\}_{i=1}^3$ be an orthonormal basis for
$\mathbb{R}^3$ and choose $\{\varphi_1,\varphi_2\}$ so
that these 5 vectors are full spark.  Hence, these vectors
yield phase retrieval in $\mathbb{R}^3$.  Now consider the
family of vectors $\Phi:=\{e_1,e_2,\varphi_1,\varphi_2\}$ in
$\mathbb{R}^3$. $\Phi$ cannot do phase retrieval in general
since it takes at least 5 vectors to
do phase retrieval in $\mathbb{R}^3$.  However, given
unit norm $x,y\in \mathbb{R}^3$, $\Phi$ will do
phase retrieval in this scenario since we have knowledge of the norms of the signals.  Assume
\[ |\langle x,e_i\rangle|=|\langle y,e_i\rangle|
\mbox{ and } |\langle x,\varphi_i\rangle| = |\langle y,
\varphi_i\rangle|\mbox{ for } i=1,2.\]
Then 
\[ 1=\|x\|^2 = \sum_{i=1}^3|\langle x,e_i\rangle|^2,\]
and similarly for $y$. This implies
\[ |\langle x,e_3\rangle|^2=
1-\sum_{i=1}^2|\langle x,e_i\rangle|^2 = 1-\sum_{i=1}^2
|\langle y,e_i\rangle|^2 = |\langle y,e_3\rangle|^2.\]
That is, $x,y$ have the same modulus of inner products with
all 5 vectors which yield phase retrieval and so
$x = \pm y$.

A fundamental idea is to apply operators to vectors and
subspaces which yield phase retrieval or norm
retrieval.  We now consider 
when operators preserve these concepts. 

\begin{proposition}\label{prop5.4}
If $\{\varphi_i\}_{i=1}^M$ is a frame in $\mathcal{H}_N$ which yields phase retrieval (respectively norm retrieval) then $\{P\varphi_i\}_{i=1}^M$ yields phase retrieval (respectively norm retrieval) for all orthogonal projections $P$ on $\mathcal{H}_N$.
\end{proposition}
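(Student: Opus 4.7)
The plan is to reduce the statement about $\{P\varphi_i\}_{i=1}^M$ doing phase/norm retrieval inside the range of $P$ to the known statement that $\{\varphi_i\}_{i=1}^M$ does phase/norm retrieval on all of $\mathcal{H}_N$. The key observation is that if $P$ is an orthogonal projection and $x\in P\mathcal{H}_N$, then $Px=x$, and hence
\[
\langle x, P\varphi_i\rangle \;=\; \langle Px, \varphi_i\rangle \;=\; \langle x, \varphi_i\rangle.
\]
So the intensity measurements of a vector in $P\mathcal{H}_N$ against $\{P\varphi_i\}_{i=1}^M$ coincide with its intensity measurements against $\{\varphi_i\}_{i=1}^M$ viewed in the ambient space.

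For the phase retrieval case, I would start by fixing $x,y\in P\mathcal{H}_N$ with $|\langle x, P\varphi_i\rangle| = |\langle y, P\varphi_i\rangle|$ for all $i=1,\dots,M$. Applying the identity above to both $x$ and $y$ yields $|\langle x,\varphi_i\rangle| = |\langle y,\varphi_i\rangle|$ for all $i$. Since $\{\varphi_i\}_{i=1}^M$ yields phase retrieval on $\mathcal{H}_N$, this forces $x = cy$ for some unimodular scalar $c$, which is exactly phase retrieval for $\{P\varphi_i\}_{i=1}^M$ in $P\mathcal{H}_N$ (using the convention for projected systems stated in the Preliminaries).

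The norm retrieval case proceeds identically: under the same hypothesis $|\langle x, P\varphi_i\rangle| = |\langle y, P\varphi_i\rangle|$ for all $i$ with $x,y\in P\mathcal{H}_N$, the identity gives $|\langle x,\varphi_i\rangle| = |\langle y,\varphi_i\rangle|$ for all $i$, and the norm retrieval property of $\{\varphi_i\}_{i=1}^M$ then yields $\|x\|=\|y\|$.

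There is no real obstacle here; the only point requiring any care is the correct interpretation of what it means for $\{P\varphi_i\}_{i=1}^M$ to do phase/norm retrieval, namely that the conclusion is required only for test vectors $x,y$ lying in $P\mathcal{H}_N$. This is precisely the convention fixed in the Preliminaries, and it is what makes the one-line reduction $\langle x, P\varphi_i\rangle = \langle x,\varphi_i\rangle$ (for $x\in P\mathcal{H}_N$) sufficient to transfer both properties from $\{\varphi_i\}_{i=1}^M$ to $\{P\varphi_i\}_{i=1}^M$.
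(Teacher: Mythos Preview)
Your proposal is correct and is essentially identical to the paper's proof: both take $x,y\in P\mathcal{H}_N$, use the self-adjointness of $P$ and $Px=x$ to rewrite $|\langle x,P\varphi_i\rangle|=|\langle x,\varphi_i\rangle|$, and then invoke phase (respectively norm) retrieval of $\{\varphi_i\}_{i=1}^M$ in the ambient space. The only cosmetic difference is that the paper writes the chain of equalities in one displayed line.
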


\begin{proof}
Let $x,y \in P(\mathcal{H}_N)$ such that $|\langle x, P\varphi_i\rangle|^2 = |\langle y, P\varphi_i\rangle|^2$ for all $i=\{1,\dots, M\}$. For all $i \in \{1,\dots,M\}$, we have 
\[|\langle x, \varphi_i\rangle |^2 = |\langle Px, \varphi_i\rangle|^2=|\langle x, P\varphi_i\rangle|^2=|\langle y, P\varphi_i\rangle|^2=|\langle Py, \varphi_i\rangle|^2 =|\langle y, \varphi_i\rangle|^2.\]
Since $\{\varphi_i\}_{i=1}^M$ gives phase retrieval (respectively norm retrieval) then this implies $x=cy$ for some scalar $|c|=1$ (respectively $\|x\|=\|y\|$). Therefore, $\{P\varphi_i\}_{i=1}^M$ yields phase retrieval (respectively norm retrieval).
\end{proof}

Although norm retrieval is preserved when applying any projection to the vectors, this does not hold when we apply an invertible operator to the vectors. The next theorem classifies when invertible operators
maintain norm retrieval.

\begin{theorem} \label{abc}
Let $\{\varphi_i\}_{i=1}^M$ be vectors in $\mathcal{H}_N$. The following are equivalent:
\begin{enumerate}
\item $\{\varphi_i\}_{i=1}^M$ yields phase retrieval.
\item $\{T\varphi_i\}_{i=1}^M$ yields phase retrieval for all invertible operators $T$ on $\mathcal{H}_N$.
\item $\{T\varphi_i\}_{i=1}^M$ yields norm retrieval for all invertible operators $T$ on $\mathcal{H}_N$.
\end{enumerate}
\end{theorem}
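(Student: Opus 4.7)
The approach is to close the cycle $(1) \Rightarrow (2) \Rightarrow (3) \Rightarrow (1)$. The first two implications are straightforward, but the last is the substance of the theorem. For $(1) \Rightarrow (2)$, given any invertible $T$ and any $x, y$ with $|\langle x, T\varphi_i\rangle| = |\langle y, T\varphi_i\rangle|$ for all $i$, I would push $T$ onto the other factor via $\langle x, T\varphi_i\rangle = \langle T^*x, \varphi_i\rangle$, apply phase retrieval of $\{\varphi_i\}$ to conclude $T^*x = cT^*y$ for some unimodular $c$, and cancel the invertible $T^*$ to get $x = cy$. The implication $(2) \Rightarrow (3)$ is the trivial observation, already made in the remark following the definition of norm retrieval, that phase retrieval always implies norm retrieval.

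For the main implication $(3) \Rightarrow (1)$ I would argue by contrapositive. Assume $\{\varphi_i\}$ fails phase retrieval; then there exist $u, v \in \mathcal{H}_N$ with $|\langle u, \varphi_i\rangle| = |\langle v, \varphi_i\rangle|$ for all $i$ and yet $u \neq cv$ for every unimodular scalar $c$. If $\|u\| \neq \|v\|$ already, the choice $T = I$ exhibits failure of norm retrieval for $\{T\varphi_i\}$. Otherwise $\|u\| = \|v\|$, and $u, v$ must be linearly independent: any relation $u = \alpha v$ would force $|\alpha| = 1$, contradicting $u \neq cv$ for unimodular $c$. With $V := \lspan\{u,v\}$ I would define an invertible operator $R$ on $\mathcal{H}_N$ by setting $Ru = u$ and $Rv = 2v$ on the basis $\{u, v\}$ of the $2$-dimensional subspace $V$, and $R = I$ on $V^{\perp}$; this is invertible because it is invertible on each summand of the orthogonal decomposition $\mathcal{H}_N = V \oplus V^{\perp}$.

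Finally, I would set $T := (R^{-1})^{*}$ so that $T^* = R^{-1}$, and take $x' := Ru$ and $y' := Rv$. Since $T^*Rw = R^{-1}Rw = w$, one computes $\langle x', T\varphi_i\rangle = \langle T^*x', \varphi_i\rangle = \langle u, \varphi_i\rangle$, and similarly $\langle y', T\varphi_i\rangle = \langle v, \varphi_i\rangle$, so $|\langle x', T\varphi_i\rangle| = |\langle y', T\varphi_i\rangle|$ for all $i$, while $\|x'\| = \|u\| \neq 2\|v\| = \|y'\|$. Thus $\{T\varphi_i\}$ fails norm retrieval, completing the contrapositive. The main obstacle is the construction of this ``norm-distorting'' invertible $R$ in the equal-norm case; once one recognizes that a bad pair with equal norms must be linearly independent, any invertible operator that scales $u$ and $v$ differently on their span finishes the argument.
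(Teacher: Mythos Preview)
Your proof is correct and follows essentially the same approach as the paper: the implications $(1)\Rightarrow(2)$ and $(2)\Rightarrow(3)$ are handled identically, and for $(3)\Rightarrow(1)$ both arguments reduce to the observation that if $|\langle u,\varphi_i\rangle|=|\langle v,\varphi_i\rangle|$ for all $i$, then $|\langle Ru,(R^{-1})^*\varphi_i\rangle|=|\langle Rv,(R^{-1})^*\varphi_i\rangle|$ for every invertible $R$, so one need only build an invertible $R$ with $\|Ru\|\neq\|Rv\|$ when $u$ and $v$ are not unimodular multiples of each other. The only cosmetic differences are that the paper argues directly rather than by contrapositive, and it chooses its norm-distorting operator to be diagonal in an orthonormal basis with first vector $u/\|u\|$ (scaling that direction by $1$ and the rest by $\tfrac12$), whereas you work in the possibly non-orthogonal basis $\{u,v\}$ of their span; both constructions do the job.
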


\begin{proof}
$(1) \Rightarrow (2)$ Let $T$ be any invertible operator on $\mathbb{R}^N$ and let $x,y \in \mathcal{H}_N$ be such that $|\langle x, T\varphi_i \rangle|= |\langle y, T\varphi_i \rangle|$ for all $i \in \{1,\dots,M\}$. Then $|\langle T^*x, \varphi_i\rangle|=|\langle T^*y, \varphi_i \rangle|$ for all  $i \in \{1,\dots,M\}$. Since $\{\varphi_i\}_{i=1}^M$ yields phase retrieval then this implies $T^*x=cT^*y$ for some scalar $|c|=1$. Since $T$ is invertible and linear then $T^{-*}T^*x=T^{-*}cT^*y$ implies $x = cy$ and $|c|=1$. Therefore, $\{T\varphi_i\}_{i=1}^M$ does phase retrieval.

\vspace{.1in}

$(2)\Rightarrow (3)$ Since phase retrieval implies norm retrieval then this is clear.
\vspace{.1in}

$(3) \Rightarrow (1)$ Choose nonzero $x,y \in \mathcal{H}_N$ such that $|\langle x, \varphi_i \rangle|=|\langle y, \varphi_i \rangle| $ for all $i \in \{1,\dots,M\}$. By assumption, $\{T\varphi_i\}_{i=1}^M$ yields norm retrieval for all invertible operators $T$ on $\mathcal{H}_N$. 

Let $T$ be any invertible operator on $\mathcal{H}_N$, then $T^{-*}$ is an invertible operator and hence $\{T^{-*}\varphi_i\}_{i=1}^M$ yields norm retrieval. For $Tx, Ty \in \mathcal{H}_N$, we have 
$$
\begin{array}{rcl}
|\langle Tx,T^{-*}\varphi_i\rangle| &=& |\langle T^{-1}Tx, \varphi_i\rangle|\\
&=& |\langle x, \varphi_i \rangle |\\
&=& |\langle y, \varphi_i \rangle |\\
&=&|\langle T^{-1}Ty, \varphi_i \rangle|\\
&=&|\langle Ty, T^{-*}\varphi_i\rangle |,
\end{array}
$$

for every $i \in \{1,\dots, M\}$. Hence, $||Tx||=||Ty||$ for any invertible operator $T$ on $\mathcal{H}_N$.

Now we will be done if we can show that $\|Tx\|=\|Ty\|$ for any invertible $T$ implies $y=cx$ for some scalar $c$ with $|c|=1$.  First note that since the identity operator is invertible we have that $\|x\|=\|y\|$, so if $y=cx$ then it follows that $|c|=1$.  Now choose an orthonormal basis $\{e_j\}_{j=1}^N$ for $\mathcal{H}_N$ with $e_1=\frac{x}{\|x\|}$ and suppose $y=\sum_{j=1}^N\alpha_je_j$, so that $\|y\|^2=\sum_{j=1}^N\alpha_j^2$.  Define the operator $T$ by $Te_1=e_1$ and $Te_j=\frac{1}{2}e_j$ for $j=2,...,N$.  Now we have that $\|x\|^2=\|Tx\|^2=\|Ty\|^2=\alpha_1^2+\sum_{j=2}^N\frac{1}{4}\alpha_j^2$, which implies that $\sum_{j=2}^N\alpha_j^2=\frac{1}{4}\sum_{j=2}^N\alpha_j^2$, and so $\alpha_j=0$ for $j=2,...,N$.  Therefore, $y=\alpha_1e_1=\frac{\alpha_1}{\|x\|}x$ which competes the proof.
\end{proof}

Note that the equivalence of (1) and (2) in the above Theorem was shown in \cite{BCE}, so the new part is that (3) is equivalent to both of these.

At first glance one would think that retrieving the norm of a signal would be much easier than recovering the actual signal. However, Theorem \ref{abc} gives a new classification of phase retrieval in terms of norm retrieval and states that if every invertible operator applied to a frame yields norm retrieval then our original frame yields phase retrieval. This illustrates that recovering the norm of a signal may be more similar to recovering the actual signal than originally thought and hence may not be as easily achievable as anticipated.

\begin{problem} Does Theorem \ref{abc} generalize to subspaces?
\end{problem}

Theorem \ref{abc} shows that if a frame does not yield phase retrieval, then we cannot apply an invertible operator to it in order to get a frame that does yield phase retrieval. In contrast, it is true that there exists at least one invertible operator which when applied to a frame yields norm retrieval.

\begin{proposition}
Given $\{\varphi_i\}_{i=1}^M$ spanning $\mathcal{H}_N$, there exists an invertible operator $T$ on $\mathcal{H}_N$ so that $\{T\varphi_i\}_{i=1}^M$ yields norm retrieval.
\end{proposition}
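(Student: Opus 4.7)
The plan is to exhibit an explicit $T$, namely $T=S^{-1/2}$ where $S$ is the frame operator of $\{\varphi_i\}_{i=1}^M$. Since the $\varphi_i$ span $\mathcal{H}_N$, the frame operator $S=\sum_{i=1}^M \varphi_i\varphi_i^{*}$ is positive and invertible, so $S^{-1/2}$ is a well-defined invertible self-adjoint operator.

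Next I would check that $\{S^{-1/2}\varphi_i\}_{i=1}^M$ is a Parseval frame. Its frame operator is
\[
\sum_{i=1}^M (S^{-1/2}\varphi_i)(S^{-1/2}\varphi_i)^{*} = S^{-1/2}\!\left(\sum_{i=1}^M \varphi_i\varphi_i^{*}\right)\! S^{-1/2} = S^{-1/2}SS^{-1/2}=I,
\]
so by the reconstruction formula for Parseval frames,
\[
\|x\|^2 = \sum_{i=1}^M |\langle x,S^{-1/2}\varphi_i\rangle|^2 \qquad \text{for every } x\in\mathcal{H}_N.
\]

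From this identity, norm retrieval is immediate: if $|\langle x,T\varphi_i\rangle|=|\langle y,T\varphi_i\rangle|$ for all $i=1,\dots,M$, then summing the squares gives $\|x\|^2=\|y\|^2$. Hence $T=S^{-1/2}$ is the desired invertible operator. There is essentially no obstacle here; the whole point is that any spanning family can be canonically rescaled to a Parseval frame, and Parseval frames trivially yield norm retrieval via Parseval's identity. It is worth remarking that this construction is sharp in the sense of Theorem~\ref{abc}: if $\{\varphi_i\}$ does not already yield phase retrieval, then no single $T$ can upgrade norm retrieval of $\{T\varphi_i\}$ to the full family $\{T'\varphi_i\}$ ranging over all invertible $T'$; the proposition only asserts the existence of one such $T$.
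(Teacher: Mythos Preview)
Your argument is correct, but it takes a different route from the paper. The paper's proof extracts a linearly independent subfamily $\{\varphi_i\}_{i=1}^N$ from the spanning set and chooses $T$ to send these $N$ vectors to an orthonormal basis; norm retrieval then follows from Parseval's identity for that basis (and adding the remaining $T\varphi_i$ does no harm). Your construction instead applies the canonical symmetrization $T=S^{-1/2}$ to the \emph{entire} family, producing a Parseval frame and invoking the Parseval identity for frames. Your approach is arguably more natural from the frame-theoretic standpoint and ties in directly with Proposition~\ref{nrtight} (a tight frame has frame operator a scalar multiple of $I$, hence $I\in\lspan\{P_i\}$); the paper's approach is slightly more elementary in that it avoids the frame operator altogether and shows that only $N$ of the vectors are needed after applying $T$. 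Either way the proof is essentially a one-liner once one notes that orthonormal bases (or, more generally, tight frames) trivially yield norm retrieval.
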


\begin{proof}
Without loss of generality, assume $\{\varphi_i\}_{i=1}^N$ are linearly independent. Choose an invertible operator $T$ so that $T\varphi_i = e_i$ for all $i=1,\dots, N$, where $\{e_i\}_{i=1}^N$ is an orthonormal basis for $\mathcal{H}_N$. Thus, for any $x \in \mathcal{H}_N$, $||x||^2=\sum_{i=1}^N | \langle x, e_i \rangle |^2 = \sum_{i=1}^N | \langle x, T\varphi_i \rangle |^2$. Hence, $\{T\varphi_i\}_{i=1}^N$ yields norm retrieval. In particular, $\{T\varphi_i\}_{i=1}^M$ yields norm retrieval. 
\end{proof}
\section{The identity operator, norm retrieval and phase retrieval}

Instead of applying operators to a frame and observing properties regarding the image; we now look at what operators lie in the span of the projections onto a collection of subspaces. In numerous results and examples regarding phase retrieval, we have found that often times when a collection of vectors or projections yields phase retrieval or norm retrieval then the identity is in their span. In this section we classify when this occurs and when it fails.  In 
\cite{CCPW}, they show that having the identity in the
span of a family of projections doing phase retrieval,
will generally yield that the orthogonal complements do
phase retrieval.

\begin{theorem}\cite{CCPW}\label{aaa}
Assume $\{W_i\}_{i=1}^M$ are subspaces of $\mathbb{R}^N$
yielding phase retrieval with corresponding 
orthogonal projections $\{P_i\}_{i=1}^M$.  If $I=\sum_{i=1}^M
a_iP_i$ and $\sum_{i=1}^M a_i \not= 1$, then $\{W_i^{\perp}
\}_{i=1}^M$ yields phase retrieval.
\end{theorem}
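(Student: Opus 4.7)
The plan is to combine Proposition \ref{oprenr} with a direct computation. By Proposition \ref{oprenr}, since $\{W_i\}_{i=1}^M$ already yields phase retrieval, it suffices to show that $\{I - P_i\}_{i=1}^M$ yields norm retrieval. That is the goal of the argument.

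First I would translate the orthogonal-complement norm hypothesis back to the original projections. Suppose $x, y \in \mathbb{R}^N$ satisfy $\|(I-P_i)x\| = \|(I-P_i)y\|$ for every $i$. Using the orthogonal decomposition $\|z\|^2 = \|P_i z\|^2 + \|(I-P_i) z\|^2$, this is equivalent to
\[
\|P_i x\|^2 - \|P_i y\|^2 = \|x\|^2 - \|y\|^2 \qquad (i=1,\dots,M).
\]
Set $c := \|x\|^2 - \|y\|^2$, so $\|P_i x\|^2 - \|P_i y\|^2 = c$ for all $i$.

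Next I would exploit the representation $I = \sum_i a_i P_i$ by recognizing that $\|P_i x\|^2 = \langle P_i x, x\rangle$ (since $P_i$ is self-adjoint and idempotent). Multiplying the previous identity by $a_i$ and summing over $i$ gives
\[
\Big\langle \Big(\sum_{i=1}^M a_i P_i\Big)x, x\Big\rangle - \Big\langle \Big(\sum_{i=1}^M a_i P_i\Big)y, y\Big\rangle \;=\; c \sum_{i=1}^M a_i.
\]
The left-hand side collapses to $\|x\|^2 - \|y\|^2 = c$, yielding $c = c \sum_i a_i$, i.e.\ $c\bigl(1 - \sum_i a_i\bigr) = 0$. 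The hypothesis $\sum_i a_i \neq 1$ then forces $c = 0$, so $\|x\| = \|y\|$. This shows $\{I - P_i\}_{i=1}^M$ yields norm retrieval, and invoking Proposition \ref{oprenr} finishes the proof.

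I do not anticipate a serious obstacle here: the main trick is just the self-adjoint/idempotent identity $\|P_i x\|^2 = \langle P_i x, x\rangle$, which converts a sum of nonlinear quantities into a genuinely linear expression in $\sum a_i P_i$. The role of the hypothesis $\sum_i a_i \neq 1$ is precisely to make the scalar coefficient $(1 - \sum a_i)$ invertible; if $\sum a_i = 1$ the argument stalls and indeed norm retrieval need not transfer to the complements. The final invocation of Proposition \ref{oprenr} is what makes this work without reproving phase retrieval from scratch.
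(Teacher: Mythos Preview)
The paper does not actually prove this theorem; it is quoted from \cite{CCPW} as a known result, so there is no proof in the paper to compare against. That said, your argument is correct. It also fits the spirit of this paper particularly well: you reduce the question, via Proposition~\ref{oprenr}, to showing that $\{I-P_i\}_{i=1}^M$ yields norm retrieval, and then verify this by the same kind of linearization used in Proposition~\ref{nrtight}, namely $\|P_i x\|^2 = \langle P_i x, x\rangle$ together with $I = \sum_i a_i P_i$. The scalar equation $c(1-\sum_i a_i)=0$ you derive is exactly the place where the hypothesis $\sum_i a_i \neq 1$ enters, and your remark about why the argument stalls when $\sum_i a_i = 1$ is apt. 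In effect, your proof shows that Theorem~\ref{aaa} is an immediate corollary of the norm retrieval machinery this paper introduces, which is a nice observation even if the paper does not spell it out.
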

 
We now state one consequence of Theorem \ref{aaa} which did not appear in \cite{CCPW}.

\begin{theorem}\label{cor2}
Let $\{W_i\}_{i=1}^M$ be a collection subspaces of $\mathcal{H}_N$ that yields phase retrieval, and suppose further that $\mathrm{dim}(W_i)=K$ for every $i=1,2,...,M$.  Let $P_i$ be the orthogonal projection onto $W_i$ and suppose that $I\in\mathrm{span}\{P_i\}_{i=1}^M$, then $\{W_i^{\perp}\}$ yields phase retrieval.
\end{theorem}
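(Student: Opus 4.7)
The plan is to reduce everything to Theorem \ref{aaa}, whose only hypothesis beyond phase retrieval is the condition $\sum_{i=1}^M a_i \neq 1$ in some representation $I = \sum_{i=1}^M a_i P_i$. By assumption such a representation exists, so it suffices to verify this inequality.

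The key observation is that the rank equality $\dim(W_i) = K$ forces $\Tr(P_i) = K$ for every $i$. Taking the trace of both sides of $I = \sum_{i=1}^M a_i P_i$ therefore yields
\[
N = \sum_{i=1}^M a_i \, \Tr(P_i) = K \sum_{i=1}^M a_i,
\]
so $\sum_{i=1}^M a_i = N/K$. To conclude we need $N \neq K$. If $K = N$, then each $W_i = \mathcal{H}_N$ and each $P_i = I$, which collapses the phase retrieval condition to the requirement that $\|x\| = \|y\|$ imply $x = cy$ with $|c| = 1$; this fails as soon as $\dim \mathcal{H}_N \geq 2$, contradicting the hypothesis that $\{W_i\}_{i=1}^M$ yields phase retrieval. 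Hence $K < N$, giving $\sum_{i=1}^M a_i = N/K > 1$, and in particular $\sum_{i=1}^M a_i \neq 1$. Theorem \ref{aaa} now yields that $\{W_i^\perp\}_{i=1}^M$ does phase retrieval.

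The main (and essentially only) obstacle is making sure the side condition $\sum_{i=1}^M a_i \neq 1$ holds automatically; beyond the one-line trace computation, this reduces to ruling out the degenerate case $K = N$ using the phase retrieval hypothesis itself.
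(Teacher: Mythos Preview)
Your proof is correct and follows exactly the same approach as the paper: write $I = \sum a_i P_i$, take traces using $\Tr(P_i) = K$ to get $\sum a_i = N/K > 1$, and invoke Theorem~\ref{aaa}. The only difference is that you explicitly justify $K < N$ via the phase retrieval hypothesis, whereas the paper simply asserts it.
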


\begin{proof}
Let $I=\sum_{i=1}^Ma_iP_i$.  Then
\[N=\mathrm{Tr}(I) 
=\mathrm{Tr}(\sum_{i=1}^Ma_iP_i) 
=\sum_{i=1}^Ma_i\mathrm{Tr}(P_i) 
=K\sum_{i=1}^Ma_i\]
since $\mathrm{Tr}(P_i)=K$ for every $i$.  Therefore, $\sum_{i=1}^Ma_i=\frac{N}{K}>1$ since $K<N$, so the result follows from Theorem \ref{aaa}.
\end{proof}

Given a collection of orthogonal projections $\{P_i\}_{i=1}^M$, we cannot conclude that they yield phase retrieval just because $I\in\mathrm{span}\{P_i\}_{i=1}^M$. For example, given any projection $P$, then $I=P+(I-P)$ but certainly $\{P,I-P\}$ will not yield phase retrieval.  However, we now show that this is enough to conclude that $\{P_i\}_{i=1}^M$ yields norm retrieval.

\begin{proposition}\label{nrtight} Let $\{W_{i}\}_{i=1}^{M}$ be subspaces of $\mathcal{H}_{N}$, and let $\{P_{i}\}_{i=1}^{M}$ be the associated projections. If $I\in\lspan\{P_{i}\}_{i=1}^{M}$, then $\{W_{i}\}_{i=1}^{M}$ gives norm retrieval.
\end{proposition}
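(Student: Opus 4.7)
The plan is to unpack the hypothesis $I \in \lspan\{P_i\}_{i=1}^M$ as a concrete identity $I = \sum_{i=1}^M a_i P_i$ and then evaluate the associated quadratic form at an arbitrary $x$. The key observation is that because each $P_i$ is an orthogonal projection, it is self-adjoint and idempotent, so
\[
\langle P_i x, x\rangle = \langle P_i^2 x, x\rangle = \langle P_i x, P_i x\rangle = \|P_i x\|^2.
\]
Applying this to the decomposition of $I$ gives
\[
\|x\|^2 = \langle Ix, x\rangle = \sum_{i=1}^M a_i \langle P_i x, x\rangle = \sum_{i=1}^M a_i \|P_i x\|^2
\]
for every $x \in \mathcal{H}_N$, and identically for $y$.

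From here the conclusion is immediate. If $x, y \in \mathcal{H}_N$ satisfy $\|P_i x\| = \|P_i y\|$ for all $i = 1, \dots, M$, then the right-hand sides of the two expressions above agree term by term, so $\|x\|^2 = \|y\|^2$, hence $\|x\| = \|y\|$. This is precisely norm retrieval.

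There is no real obstacle here; the statement is essentially a one-line consequence of the fact that for an orthogonal projection the diagonal of the associated quadratic form reads off $\|P_i x\|^2$. It is worth noting in the write-up that the scalars $a_i$ are not assumed to be positive or to sum to anything in particular; the linear relation alone forces the norm of $x$ to be a fixed linear functional of the numbers $\|P_i x\|^2$, which is all that norm retrieval requires. This also clarifies the contrast with phase retrieval alluded to in the paragraph preceding the proposition: having $I$ in the span pins down the norm but says nothing about relative signs.
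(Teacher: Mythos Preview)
Your proof is correct and follows essentially the same approach as the paper: write $I=\sum_{i=1}^M a_i P_i$, use $\langle P_i x,x\rangle=\|P_i x\|^2$ to get $\|x\|^2=\sum_i a_i\|P_i x\|^2$, and conclude that equal projection norms force equal norms. Your additional remarks about the signs of the $a_i$ and the contrast with phase retrieval are accurate and helpful but not part of the paper's proof.
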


\begin{proof} 
Suppose $I=\sum_{i=1}^{M}a_iP_{i} = I$. Notice for $x\in\mathcal{H}_{N}$ we have
\[\sum_{i=1}^{M}a_{i}\norm{P_{i}x}^{2} = \sum_{i=1}^{M}\langle a_{i}P_{i}x,x\rangle = \left\langle \sum_{i=1}^{M}a_{i}P_{i}x,x\right\rangle = \langle Ix,x\rangle = \norm{x}^{2}\]

Let $x,y\in\mathcal{H}_{N}$ such that $\norm{P_{i}x}=\norm{P_{i}y}$ for all $i=1,\ldots,M$. We have
\[\norm{x}^{2} = \sum_{i=1}^{M}a_{i}\norm{P_{i}x}^{2} = \sum_{i=1}^{M}a_{i}\norm{P_{i}y}^{2} = \norm{y}^{2}.\]
Hence $\{W_{i}\}_{i=1}^{M}$ yields norm retrieval.
\end{proof}

The converse of Proposition \ref{nrtight} is far from true.  One way to see this (at least for the real case) is as follows:  For $2N\leq M\leq N(N+1)/2$ choose any full spark frame $\Phi=\{\varphi_i\}_{i=1}^M$ for $\mathbb{R}^N$ such that $\{\varphi_i\varphi_i^*\}_{i=1}^M$ is linearly independent (which happens generically, see e.g., \cite{CC}).  Let $S$ be the frame operator for $\Phi$ and define
$$
\psi_i=\frac{S^{-1/2}\varphi_i}{\|S^{-1/2}\varphi_i\|}
$$
with $P_i=\psi_i\psi_i^*$ (note that $P_i$ is a rank one orthogonal projection).  Since $\{S^{-1/2}\varphi_i\}_{i=1}^M$ is a Parseval frame it follows that
\begin{equation}\label{pop}
I=\sum_{i=1}^M\|S^{-1/2}\varphi_i\|^2P_i.
\end{equation}
Also, since $\{\varphi_i\}_{i=1}^M$ is linearly independent it follows that $\{P_i\}_{i=1}^M$ is linearly independent (and so \eqref{pop} is the only way to write $I$ as a linear combination of the $P_i$'s). Also since $\{\varphi_i\}_{i=1}^M$ is full spark we know that $\|S^{-1/2}\varphi_i\|\neq 0$ for every $i=1,2,...,M$.  Therefore it follows that if $\mathcal{I}\subseteq\{1,2,...,M\}$ then $I\not\in\mathrm{span}\{P_i\}_{i\in\mathcal{I}}$.  Furthermore, since $M\geq 2N$ and $\{\varphi_i\}_{i=1}^M$ is full spark it follows that $\{P_i\}_{i\in\mathcal{I}}$ yields phase retrieval (and hence norm retrieval) whenever $|\mathcal{I}|\geq 2N-1$.

Although the above example
 proves that the converse of Proposition \ref{nrtight} is false, in the special case where $\sum_{i=1}^M\mathrm{dim}(W_i)=N$ it turns out to be true.

\begin{proposition}\label{orthogonal}
A collection of unit norm vectors $\{\varphi_i\}_{i=1}^N$ in $\mathcal{H}_{N}$ yield norm retrieval if and only if $\{\varphi_i\}_{i=1}^N$ are orthogonal.
\end{proposition}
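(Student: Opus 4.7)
The orthogonal projection onto $\lspan\{\varphi_i\}$ is $P_i=\varphi_i\varphi_i^{*}$ because $\norm{\varphi_i}=1$, so $\norm{P_ix}^{2}=\abs{\inner{x}{\varphi_i}}^{2}$ and the norm retrieval hypothesis becomes: $\abs{\inner{x}{\varphi_i}}=\abs{\inner{y}{\varphi_i}}$ for every $i$ forces $\norm{x}=\norm{y}$. I first observe that $\{\varphi_i\}_{i=1}^{N}$ must span $\mathcal{H}_N$, since otherwise any nonzero vector in its orthogonal complement compared with $y=0$ would violate norm retrieval. With exactly $N$ vectors in hand, they therefore form a basis, and I introduce the dual basis $\{\widetilde{\varphi}_i\}_{i=1}^{N}$ characterized by $\inner{\widetilde{\varphi}_i}{\varphi_j}=\delta_{ij}$.

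The direction $(\Leftarrow)$ is immediate: orthogonal unit norm vectors form an orthonormal basis, and Parseval's identity $\norm{x}^{2}=\sum_i\abs{\inner{x}{\varphi_i}}^{2}$ directly yields norm retrieval.

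For $(\Rightarrow)$ the plan is a phase flipping argument. Fix indices $i_0\neq j_0$ and set $x=\widetilde{\varphi}_{i_0}+\widetilde{\varphi}_{j_0}$. For any scalar $\lambda$ with $\abs{\lambda}=1$, let $y=\lambda\widetilde{\varphi}_{i_0}+\widetilde{\varphi}_{j_0}$; then $\inner{y}{\varphi_j}$ equals $\lambda$ at $j=i_0$, equals $1$ at $j=j_0$, and vanishes otherwise, so $\abs{\inner{y}{\varphi_j}}=\abs{\inner{x}{\varphi_j}}$ for every $j$. Norm retrieval therefore forces $\norm{x}^{2}=\norm{y}^{2}$. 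Expanding both sides in terms of the Gram matrix $G_{ij}:=\inner{\widetilde{\varphi}_i}{\widetilde{\varphi}_j}$ (which is Hermitian), the diagonal contributions cancel because $\abs{\lambda}=1$, and the identity reduces to
\[
2\,\mathrm{Re}\bigl((\lambda-1)G_{i_0 j_0}\bigr)=0
\qquad\text{for every }\abs{\lambda}=1.
\]
Taking $\lambda=-1$ kills the real part of $G_{i_0 j_0}$ and, in the complex case, $\lambda=i$ kills the imaginary part, so $G_{i_0 j_0}=0$. Hence $\{\widetilde{\varphi}_i\}_{i=1}^{N}$ is an orthogonal basis.

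Orthogonality of the dual basis transfers to the original vectors: biorthogonality combined with $\widetilde{\varphi}_i\perp\widetilde{\varphi}_j$ for $i\neq j$ forces $\varphi_j=\widetilde{\varphi}_j/\norm{\widetilde{\varphi}_j}^{2}$, so the $\varphi_i$'s are pairwise orthogonal as claimed. The only step demanding real care is the phase flipping calculation (and making sure the two-term test vector actually satisfies the hypothesis of norm retrieval); once the dual basis is brought in, the remainder is routine linear algebra, and I do not anticipate any deeper obstacle.
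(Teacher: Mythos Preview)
Your argument is correct. The spanning observation, the dual-basis phase-flip, the Gram-matrix computation, and the transfer of orthogonality from $\{\widetilde\varphi_i\}$ back to $\{\varphi_i\}$ all check out in both the real and complex cases.

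Your route differs genuinely from the paper's. The paper does not prove Proposition~\ref{orthogonal} directly; it deduces it from Theorem~\ref{nrbasis}, a subspace result stating that projections $\{P_i\}_{i=1}^M$ with $\sum_i\mathrm{rank}(P_i)=N$ yield norm retrieval if and only if $\sum_i P_i=I$. The proof of that theorem is considerably heavier: it fixes a subspace $W_j$, lets $V_j$ be the span of the others, shows $W_j\cap V_j=\{0\}$, and then argues by contradiction (via a carefully chosen pair $v_1,v_2$) that $P_jQ=0$, forcing $W_j\perp W_i$ for $i\neq j$. Your dual-basis phase-flip is a cleaner and more elementary route in the rank-one setting, exploiting the algebraic convenience of biorthogonality that has no obvious analogue for higher-rank subspaces. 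What the paper's approach buys is generality: its argument works uniformly for arbitrary-rank projections, whereas your dual-basis trick is specific to the vector case.
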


Proposition \ref{orthogonal} is a consequence of the following more general theorem about subspaces.

\begin{theorem}\label{nrbasis}
Let $\{W_i\}_{i=1}^M$ be a collection of subspaces of $\mathcal{H}_N$ with the property that $\sum_{i=1}^M \dim(W_i)=N$ and let $P_i$ be the orthogonal projection onto subspace $W_i$ for each $i=1,\dots, M$. The following are equivalent:
\begin{enumerate}
\item $\{W_i\}_{i=1}^M$ yields norm retrieval 
\item $\sum_{i=1}^M P_i=I$.
\end{enumerate}
\end{theorem}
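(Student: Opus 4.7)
The direction $(2) \Rightarrow (1)$ is exactly Proposition \ref{nrtight}, so my task is to prove $(1) \Rightarrow (2)$. The plan is to combine norm retrieval with the dimension hypothesis $\sum_i \dim(W_i) = N$ to obtain a scalar identity $\sum_i c_i P_i = I$, and then to pin each $c_i$ down to $1$.

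First, I would upgrade the subspaces to a direct sum decomposition of $\mathcal{H}_N$. Applying norm retrieval to any $v \perp \sum_i W_i$ gives $\|P_i v\| = 0 = \|P_i \cdot 0\|$ for all $i$, and hence $v = 0$; so the $W_i$ span $\mathcal{H}_N$, and together with $\sum_i \dim W_i = N$ this forces the internal direct sum $\mathcal{H}_N = W_1 \oplus \cdots \oplus W_M$. In particular, the linear map $T : \mathcal{H}_N \to W_1 \oplus \cdots \oplus W_M$ defined by $Tx = (P_1 x, \ldots, P_M x)$ is a bijection: its kernel $\bigcap_i W_i^\perp$ is trivial, and the two spaces have the same dimension.

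The crux of the argument is then to transport the squared norm through $T^{-1}$. The form $Q(w_1,\ldots,w_M) := \|T^{-1}(w_1,\ldots,w_M)\|^2$ is a positive-definite quadratic form on the external direct sum, and norm retrieval says exactly that $Q$ depends only on $(\|w_1\|,\ldots,\|w_M\|)$ --- equivalently, that $Q$ is invariant under $O(W_1) \times \cdots \times O(W_M)$. A standard Schur-type argument --- bilinear pairings $W_i \times W_j \to \mathbb{R}$ invariant under $O(W_i)$ vanish when $i \neq j$, and $O(W_i)$-invariant quadratic forms on $W_i$ are scalar multiples of $\|\cdot\|^2$ --- then forces
\[
Q(w) \;=\; \sum_i c_i \|w_i\|^2.
\]
Pulling back through $T$ gives $\|x\|^2 = \sum_i c_i \|P_i x\|^2 = \langle (\sum_i c_i P_i) x, x\rangle$ for every $x$, and self-adjointness of $\sum_i c_i P_i$ produces the identity $\sum_i c_i P_i = I$. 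I expect this invariance-implies-diagonality step to be the main obstacle, since it requires handling the one-dimensional case where $O(W_i) = \{\pm 1\}$ separately from the transitive-on-the-sphere case.

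It then remains to show $c_i = 1$, which I would do via three short observations. \emph{Positivity:} since the decomposition is direct, $W_i$ is not contained in $\sum_{j\neq i} W_j$, so I can choose $x \in \bigl(\sum_{j \neq i} W_j\bigr)^\perp$ with $P_i x \neq 0$; then $\|x\|^2 = c_i \|P_i x\|^2$ forces $c_i > 0$. \emph{Upper bound:} left-multiplying $\sum_j c_j P_j = I$ by $P_i$ and then right-multiplying by $P_i$ gives $(1 - c_i) P_i = \sum_{j \neq i} c_j P_i P_j P_i$, whose right-hand side is positive semidefinite (each $P_i P_j P_i$ is PSD and each $c_j > 0$), so $c_i \le 1$. \emph{Trace:} taking traces in $\sum_j c_j P_j = I$ gives $\sum_j c_j \dim W_j = N = \sum_j \dim W_j$, and since every $c_j \le 1$ with $\dim W_j > 0$, equality can hold only when $c_j = 1$ for every $j$. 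This yields $\sum_i P_i = I$ and completes the proof.
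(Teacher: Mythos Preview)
Your proof is correct but follows a genuinely different route from the paper's. The paper argues \emph{directly} that the $W_i$ are pairwise orthogonal: for each $j$ it sets $V_j=\lspan\{W_i\}_{i\neq j}$ with projection $Q$, first shows $W_j\cap V_j=\{0\}$ (same spanning argument you give), and then shows $P_jQ=0$ by an explicit contradiction --- if $P_jQ\neq 0$ one can build $v_1=Qz$ and $v_2=Qz+\alpha(I-Q)z$ with $\|P_iv_1\|=\|P_iv_2\|$ for every $i$ but $\|v_1\|\neq\|v_2\|$. From $W_j=V_j^\perp$ the identity $\sum P_i=I$ follows immediately, with no intermediate constants $c_i$.

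Your approach trades that explicit construction for a structural argument: pull back $\|\cdot\|^2$ through the bijection $T$, observe that norm retrieval is precisely $O(W_1)\times\cdots\times O(W_M)$-invariance of the resulting quadratic form, diagonalize via the Schur-type observation, and then squeeze the coefficients to $1$ with the positivity/PSD/trace trick. This is cleaner conceptually and avoids the somewhat delicate inner-product computation in the paper, at the cost of importing the invariant-form classification (which, as you should note, is easier than you fear: taking $U_i=-I_{W_i}$ in the off-diagonal block already kills it in every dimension, so no separate one-dimensional case is needed). The paper's method, on the other hand, is entirely elementary and self-contained, and its explicit construction of $v_1,v_2$ has independent value as a concrete witness of the failure of norm retrieval. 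One minor point: your positivity and upper-bound steps tacitly assume $\dim W_i>0$, so you should remark that zero-dimensional summands contribute nothing to $\sum P_i$ and can be discarded.
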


\begin{proof} 
$(2) \Rightarrow (1)$ Follows from Proposition \ref{nrtight}.

$(1) \Rightarrow (2)$ Pick some $W_j$ and define $V_j$ to be the span of the $M-1$ subspaces $\{W_i\}_{i\neq j}$, and let $Q$ be the orthogonal projection onto $V_j$. Without loss of generality we may assume that $W_j$ is not the zero subspace. Note that 
\[\dim V_j\leq \sum_{i\neq j}\dim(W_{i})=N-\dim(W_j).\]

\emph{Claim 1:} $W_j \cap V_j= \{0\}$.

\emph{Proof of Claim:} 
Assume to the contrary that $W_{j}\cap V_j$ is nontrivial. Then
\[\dim\lspan\{W_{i}\}_{i=1}^{M} < \dim V_j+\dim W_j\leq N.\]
This implies that there exists a nonzero $x_0\in(\lspan\{W_{i}\}_{i=1}^{M})^{\perp}$ and hence $P_{i}x_0=0$ for all $i=1,\ldots,M$. However, since $\{W_{i}\}_{i=1}^{M}$ gives norm retrieval, we conclude that $x_0=0$, a contradiction. Thus $W_j\cap V=\{0\}$.\\

\emph{Claim 2: $P_jQ=QP_j=0$.}

\emph{Proof of Claim 2:}
Assume toward a contradiction that $P_j Q \neq 0$, and thus $QP_j\neq 0$. Set $Y=\{x\in W_j\colon Qx=0\}$. Since $QP_j\neq 0$ we see that $Y\neq W_j$. 

Let $Z$ be the orthogonal complement of $Y$ in $W_j$. Since $V_j\cap W_j=\{0\}$ and $Z\subset W_j$, we conclude that $V_j\cap Z=\{0\}$.

Let $z\in Z\setminus \{0\}$. Since $z\notin V_j$ we have $Qz\neq z$, and since $z\notin Y$ we have $Qz\neq 0$. Set $x:=Qz\neq0$ (note $x \neq z$) and $y:=(I-Q)z\neq0$.

Note that
\begin{align*}
    \langle P_j x,y\rangle &=\langle P_j Qz,(I-Q)z\rangle = \langle P_j Qz,P_j(I-Q)z\rangle \\
    &= \langle P_j Qz,P_jz-P_jQz\rangle = \langle P_j Qz,z-P_jQz\rangle\\
    &=\langle P_j Qz,z\rangle-\|P_jQz\|^2=\langle Qz,P_jz\rangle-\|P_jQz\|^2\\
		&=\langle Qz,z\rangle-\|P_jQz\|^2 =\langle Qz,Qz\rangle-\|P_jQz\|^2\\
		&= \|Qz\|^2-\|P_jQz\|^2
    \end{align*}

		\emph{Subclaim:} $\langle P_jx,y\rangle$ is nonzero and positive.
		
		\emph{Proof of Subclaim:} If $\langle P_jx,y\rangle=0$, then $\|Qz\|^2=\|P_jQz\|^2$. Note that $\|Qz\|^2=\|P_jQz\|^2+\|(I-P_j)Qz\|^2$ and hence $\|P_jz\|^2 \leq \|Qz\|^2$. This forces $\langle P_j x,y\rangle = \|Qz\|^2-\|P_jQz\|^2 \geq 0$.  These facts imply that $P_jQz=Qz$ and hence $Qz=x \in W_j$, which contradicts the fact that $W_j\cap V_j=\{0\}$. Thus, $\langle P_jx,y\rangle$ is nonzero and positive.

\vspace{.2in}

Set $v_{1}=x$ and $v_{2}=x + \alpha y$
for some $\alpha\in\mathcal{H}$ which will be specified later. Since $P_{i}(I-Q)=0$ for $i\neq j$, we have 
\begin{align*}
\norm{P_{i}v_{2}} &=\|P_i(x + \alpha y)\|=\|P_i(Qz+ \alpha (I-Q)z)\|\\
&=\|P_i Qz + \alpha P_i(I-Q)z\|= \|P_iQz\|=\|P_ix\|\\
&=\norm{P_{i}v_{1}}
\end{align*}
 for all $i\neq j$.

If $P_jy=0$ then we take $\alpha$ to be any nonzero scalar in $\mathcal{H}$, and we have \[\norm{P_jv_{1}}=\norm{P_jx}= \|P_jx+\alpha P_jy\| = \norm{P_jv_{2}}.\]  

If $P_jy\neq 0$ then we set $\alpha=-\frac{2\langle P_jx,y\rangle}{\norm{P_jy}^{2}}$ and we have
\begin{align*}
\norm{P_jv_{2}}^{2} & = (P_jx + \alpha P_jy)^2\\
&=
\norm{P_jx}^{2} + \alpha \bar{\alpha}\|P_jy\|^2 + \bar{\alpha}\langle P_jx, P_jy \rangle + \alpha \langle P_j y, P_j x\rangle\\
&= 
\norm{P_jx}^{2} + \frac{4\langle P_jx,y\rangle \langle y,P_jx\rangle}{\|P_jy\|^4} \|P_jy\|^2 - \frac{2\langle y,P_jx \rangle \langle P_jx, y\rangle}{\|P_jy\|^2} - \frac{2\langle P_jx, y \rangle \langle y, P_jx\rangle}{\{P_jy\|^2}\\
& = 
\norm{P_jx}^{2}+\frac{4|\langle P_jx,y\rangle|^{2}}{\norm{P_jy}^{2}}-\frac{2|\langle P_jx,y\rangle|^2}{\norm{P_jy}^{2}}-\frac{2|\langle P_jx,y\rangle|^2}{\norm{P_jy}^{2}} \\
&= \norm{P_jx}^{2} = \norm{P_jv_{1}}^{2}.
\end{align*}

Thus $\|P_iv_1\|=\|P_iv_2\|$ for all $i = 1,\dots, M$. However, for any $\alpha\in\mathcal{H}\setminus\{0\}$ we have
\[\norm{v_{2}}^{2} = \norm{x}^{2} + |\alpha|^2\norm{y}^{2}>\norm{x}^{2} = \norm{v_{1}}^{2}.\]
Hence $\{W_{i}\}$ does not yield norm retrieval, a contradiction to our assumption, and so $QP_j=P_jQ=0$, which finishes the proof of Claim 2. 

Therefore, we have that
$$
W_j=\mathrm{im}(P_j)=\mathrm{ker}(Q)=V_j^{\perp}.
$$
But if $i\neq j$ then $W_i\subseteq V_j$ and so $W_i\perp W_j$, from which (2) easily follows.
\end{proof}



\bibliographystyle{amsalpha}

\end{document}